\newtheorem{theorem}{Theorem}[section]
\newtheorem{lemma}[theorem]{Lemma}
\newtheorem{prop}[theorem]{Proposition}
\theoremstyle{definition}
\theoremstyle{remark}
\numberwithin{equation}{section}
\DeclareMathOperator{\Mod}{mod}
\newcommand{\mmod}[1]{\;(\Mod{ #1})}
\renewcommand{\geq}{\geqslant}
\renewcommand{\leq}{\leqslant}
\def\eps{\varepsilon}
\def \cA {\mathcal A}
\def \cD {\mathcal D}
\def \cS {\mathcal S}
\def \N {\mathbb{N}}
\def \Q {\mathbb{Q}}
\def \R {\mathbb{R}}
\def \Z {\mathbb{Z}}
\def \lcm {\mathrm{lcm}}
\def \Mat {\mathrm{Mat}}
\title{On commuting integer matrices}
\author{Jonathan Chapman \and Akshat Mudgal}
\address{Mathematics Institute, Zeeman Building, University of Warwick, Coventry CV4 7AL, United Kingdom}
\email{Jonathan.Chapman@warwick.ac.uk}
\address{Mathematics Institute, Zeeman Building, University of Warwick, Coventry CV4 7AL, United Kingdom}
\email{Akshat.Mudgal@warwick.ac.uk}
 \subjclass[2020]{11D45 (primary); 15A27, 15B36 (secondary)} 
\keywords{Commuting matrices, restricted divisor correlations}
\begin{document}

\begin{abstract}
Given $d, N \in \N$, we define $\mathfrak{C}_d(N)$ to be the number of pairs of $d\times d$ matrices $A,B$ with entries in $[-N,N] \cap \Z$ such that $AB = BA$. We prove that
 \[ N^{10} \ll \mathfrak{C}_3(N) \ll N^{10}, \]
 thus confirming a speculation of Browning--Sawin--Wang. We further establish that 
\[ \mathfrak{C}_2(N) = K(2N+1)^5 (1 + o(1)),\]
where $K>0$ is an explicit constant.  Our methods are completely elementary and rely on upper bounds of the correct order for restricted divisor correlations with high uniformity.
\end{abstract}
\maketitle

\section{Introduction}

Given a finite group $G$, a classical topic in group theory concerns counting the number of \emph{commuting pairs} in $G$, that is, pairs $(g_1, g_2) \in G \times G$ such that $g_1 g_2 = g_2 g_1$.
Following the early work of Erd\H{o}s--Tur\'{a}n \cite{ET1968}, many results have been established on estimating numbers of commuting pairs in families of groups. For example, Gustafson \cite{Gu1973} proved that any finite, non-abelian group $G$ can have at most $5|G|^2/8$ many commuting pairs. Neumann \cite{Ne1989} further characterised all finite groups $G$ which exhibit at least $\delta|G|^2$ many commuting pairs, for some fixed $\delta >0$. 

This question has also been analysed in the case when $G$ is replaced by a suitable set $\mathscr{A}$ of $d \times d$ matrices. For instance, for all $d\geqslant 2$, Feit--Fine \cite{FF1960} obtained an explicit formula and a generating function for the number of pairs of commuting $d\times d$ matrices whose entries lie in an arbitrary finite field. More recently, for all $d \geq 3$, Browning--Sawin--Wang \cite{BSW2024} investigated the setting when  $\mathscr{A} = {\rm Mat}_d(\mathbb{Z}, N)$, that is, the set of all $d \times d$ matrices with entries in $[-N,N]\cap \mathbb{Z}$. 
Soon after, the second author \cite{Mu2024} studied commuting pairs of $2\times 2$ matrices whose entries lie in an arbitrary, finite, non-empty set of real numbers. This topic has been widely-studied from an algebraic perspective as well, see for example \cite{Ba2000, MT1955}.

In this paper, we continue the inquiry initiated in \cite{BSW2024} by establishing almost optimal estimates when $d \in \{2,3\}$ and $\mathscr{A} = {\rm Mat}_d(\mathbb{Z},N)$. Thus, we define
\begin{equation*}
    \mathfrak{C}_d(N) = |\{(A,B)\in\Mat_d(\Z,N): AB=BA\}|.
\end{equation*}
Since any scalar multiple of the $d\times d$ identity matrix commutes with every other $d\times d$ matrix, one finds that 
\begin{align} 
\mathfrak{C}_d(N) 
& \geqslant 2|\mathrm{Mat}_d(\mathbb{Z}, N)|\cdot (2N+1) ( 1- O_d(N^{-1}))  \nonumber \\
& = 2(2N+1)^{d^2+1} ( 1- O_d(N^{-1})) \gg_d N^{d^2 + 1}  \label{eqn1.1} .
\end{align}

As for upper bounds, a standard application of the divisor function estimate furnishes the bound 
\begin{equation} \label{eqn1.2}
    \mathfrak{C}_2(N) \ll_{\eps} N^{5 + \eps}
\end{equation}
for all $\eps >0$, matching \eqref{eqn1.1} up to the $O_{\eps}(N^{\eps})$ factor when $d=2$. In the case when $d \geq 3$, one may combine a result of Basili \cite{Ba2000} on the dimension of the commuting variety along with some standard Schwartz--Zippel type arguments to prove that
\[ \mathfrak{C}_d(N) \ll_d N^{d^2 + d}.\] 
Replacing the latter with work of Salberger \cite{Sa2023}, we get that $\mathfrak{C}_d(N)  \ll_{d, \varepsilon} N^{d^2 + d - 1 + \varepsilon}$ for every $\varepsilon >0$. Browning--Sawin--Wang \cite{BSW2024} further strengthened this by proving that 
 \begin{equation}\label{eqn1.3}
     \mathfrak{C}_d(N) \ll_d N^{d^2 + 2 - \frac{2}{d+1}}
 \end{equation} 
 for all $d \geq 3$. In particular, when $d=3$, their bound implies that 
\begin{equation*}
    \mathfrak{C}_3(N) \ll N^{10 + 1/2}.
\end{equation*} 

Browning--Sawin--Wang speculated that \eqref{eqn1.1} should be the correct order of magnitude for $\mathfrak{C}_d(N)$. Our first result confirms their prediction when $d=3$.

\begin{theorem} \label{thm1.1}
For every positive integer $N$, we have
\[ N^{10} \ll \mathfrak{C}_3(N) \ll  N^{10} . \]
\end{theorem}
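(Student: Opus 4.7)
The lower bound follows immediately from \eqref{eqn1.1} with $d=3$. For the upper bound $\mathfrak{C}_3(N)\ll N^{10}$, my plan is to partition $\Mat_3(\Z,N)$ by the degree $k\in\{1,2,3\}$ of the minimal polynomial of $A$. Scalar matrices ($k=1$) commute with everything and contribute exactly $(2N+1)^{10}\ll N^{10}$. Matrices with $k=2$ necessarily take the form $A=\lambda I + C$ with $\lambda\in\Z$ and $C\in\Mat_3(\Z)$ of rank one (the minimal polynomial having only rational roots by a Galois argument on the degree-3 characteristic polynomial); I would bound this stratum at $O(N^{10})$ by counting rank-one integer matrices via a convolution of divisor-type sums, then estimating the size of the corresponding integer centraliser.

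The main case is regular $A$, i.e.\ when $\{I,A,A^2\}$ is $\Q$-linearly independent. Here the centraliser of $A$ in $\Mat_3(\Q)$ is $\Q[A]$, so every commuting $B\in\Mat_3(\Z)$ admits a unique representation $B = xI + yA + zA^2$ with $(x,y,z)\in\Q^3$. I would then exploit the six off-diagonal constraints
\[
B_{ij} = yA_{ij} + z(A^2)_{ij} \in \Z\cap[-N,N] \qquad (i\neq j).
\]
Taking positions $(1,2)$ and $(2,1)$ produces a $2\times 2$ system in $(y,z)$ whose determinant simplifies to
\[
\Delta = A_{12}(A^2)_{21} - A_{21}(A^2)_{12} = A_{12}A_{23}A_{31} - A_{13}A_{21}A_{32},
\]
the signed difference of the two ``triangle'' products running through the off-diagonal part of $A$. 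On the generic locus $\Delta\neq 0$, the pair $(y,z)$ is determined by the integer data $(B_{12},B_{21})$ subject to divisibility mod $\Delta$; the remaining four off-diagonal constraints impose further congruences modulo $\Delta$, while the diagonal yields at most $O(N)$ choices of $x$ in an interval.

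Summing over $A$, the regular-stratum count reduces to a restricted divisor correlation of the shape
\[
N\sum_{\Delta} r(\Delta)\cdot\#\big\{(B_{12},B_{21})\in [-N,N]^2\cap\Z^2 : \text{four congruences mod }\Delta\big\},
\]
where $r(\Delta)$ counts off-diagonal sextuples $(A_{ij})_{i\neq j}\in[-N,N]^6$ satisfying $A_{12}A_{23}A_{31}-A_{13}A_{21}A_{32}=\Delta$---a natural convolution involving a ternary divisor function. The required input is a bound of total size $O(N^{10})$, uniform in all shift parameters entering the congruences, \emph{without} the customary $N^\varepsilon$ loss from $\tau$. I expect this sharp, uniform divisor correlation bound to be the principal obstacle, matching exactly the ``restricted divisor correlations with high uniformity'' advertised in the abstract: the pointwise bound $\tau(n)\ll_\varepsilon n^\varepsilon$ is far too weak, and one must instead average cleanly over $\Delta$ and exploit the ternary product structure of $r(\Delta)$ to eliminate any power-of-$N$ excess.
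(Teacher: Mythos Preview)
Your approach is genuinely different from the paper's, and it carries real gaps that are not merely cosmetic.

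The paper does \emph{not} stratify by the minimal polynomial of $A$. Instead it treats $A$ and $B$ symmetrically: writing $\cD_{i,j}=a_ib_j-a_jb_i$, the diagonal of $AB-BA$ gives the three equalities $\cD_{2,4}=\cD_{3,7}=\cD_{6,8}$, while the six off-diagonal equations are recast as $MX=Y$ with $M$ a $6\times 4$ matrix built from the \emph{off-diagonal entries of both $A$ and $B$}, and $X=(a_5-a_1,b_5-b_1,a_9-a_1,b_9-b_1)^T$. One then splits by $\operatorname{rank}(M)\in\{0,\dots,4\}$. The full-rank case is handled by Lemma~\ref{lem1.5}: the diagonal equalities force the twelve off-diagonal entries to lie in $I_3(N)=\sum_h r(h)^3\ll N^8$ configurations (with $r(h)$ counting $a_1a_2-a_3a_4=h$, a \emph{binary} product difference), and then $X$ is unique, giving $O(N^2)$ diagonal choices. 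The rank-$2$ case is the other substantial one and is handled by showing that all vectors $(a_j,b_j)$, $j\in\{2,3,4,6,7,8\}$, are proportional to a single coprime direction, yielding $O(N^6)$ off-diagonal configurations directly. No ternary divisor sums appear anywhere.

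Your proposal, by contrast, funnels the regular stratum through the ternary quantity $\Delta=A_{12}A_{23}A_{31}-A_{13}A_{21}A_{32}$, and you concede that the required uniform bound on the associated correlation is ``the principal obstacle'' --- but you do not prove it. That is a genuine gap, not a routine step: pointwise control of ternary-divisor shifts without $N^\varepsilon$ loss is strictly harder than the binary moment $I_3(N)\ll N^8$ the paper actually uses. Two further issues: (i) your displayed sum $N\sum_\Delta r(\Delta)\cdot\#\{(B_{12},B_{21}):\ldots\}$ accounts only for the six off-diagonal entries of $A$ and one diagonal parameter $x$, but the three diagonal entries of $A$ are unaccounted for --- and they genuinely enter, since $(A^2)_{ij}$ depends on them, so the ``four congruences mod $\Delta$'' vary with $A_{11},A_{22},A_{33}$; (ii) the sublocus $\Delta=0$ within the regular stratum is not treated at all, and there your $2\times2$ inversion for $(y,z)$ fails. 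The $k=2$ sketch is structurally correct (your Galois argument and the rank-one conclusion are fine), but the centraliser there is $5$-dimensional, and turning ``count rank-one matrices and their integer centralisers'' into a clean $O(N^{10})$ needs its own argument that you have not supplied.

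In short: the paper's symmetric rank-of-$M$ decomposition reduces everything to the elementary binary moment bound of Lemma~\ref{lem1.5}, whereas your asymmetric centraliser approach leads to a harder ternary divisor problem that you leave open, together with several unaddressed boundary cases.
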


We now turn to the case when $d=2$ wherein we have \eqref{eqn1.2}. Moreover, using \cite[Corollary $1.3$]{Mu2024}, one can remove the $O_{\eps}(N^{\eps})$ factor and show that $\mathfrak{C}_2(N) \ll N^5$. 
Our second result strengthens these results by establishing an asymptotic formula for the number of commuting pairs of $2\times 2$ integer matrices.

\begin{theorem}\label{thm1.2}
For every positive integer $N>1$, we have
\begin{equation} \label{eqn1.4}
\mathfrak{C}_2(N) = K(2N)^5 + O(N^4\log N), 
\end{equation}
where
\[    K = \frac{10\zeta(2)}{3\zeta(3)} \approx 4.56144\ldots\,.\]
\end{theorem}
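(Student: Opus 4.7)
The plan is to reduce the commutation condition to a linear-dependence statement for auxiliary vectors in $\Z^3$, express $\mathfrak{C}_2(N)$ as a lattice sum over primitive directions, and then evaluate this sum asymptotically. Writing $A=(a_{ij})$ and $B=(b_{ij})$ and comparing the four entries of $AB-BA$, one finds that $AB=BA$ reduces to the three scalar equations $a_{12}b_{21}=a_{21}b_{12}$, $a_{12}(b_{22}-b_{11})=b_{12}(a_{22}-a_{11})$, and $a_{21}(b_{22}-b_{11})=b_{21}(a_{22}-a_{11})$. Setting $v_A:=(a_{12},a_{21},a_{22}-a_{11})$ and $v_B:=(b_{12},b_{21},b_{22}-b_{11})$ in $\Z^3$, these are exactly the three $2\times 2$ minor equations for the matrix with rows $v_A$ and $v_B$, so $AB=BA$ is equivalent to $v_A$ and $v_B$ being linearly dependent over $\Q$.

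For each $v\in\Z^3$ let $F(v)$ denote the number of $A\in\Mat_2(\Z,N)$ with $v_A=v$; counting the pairs $(a_{11},a_{22})$ with fixed difference yields $F(v)=(2N+1-|v_3|)$ when $|v_1|,|v_2|\leq N$ and $|v_3|\leq 2N$, and $F(v)=0$ otherwise. Since $v_A=0$ characterises exactly the scalar matrices (which commute with every matrix), separating the contributions where one or both of $v_A,v_B$ vanish yields
\[ \mathfrak{C}_2(N)=2(2N+1)^5-(2N+1)^2+C^*,\qquad C^*:=\sum_{\substack{v,w\in\Z^3\setminus\{0\}\\ v\parallel w}} F(v)F(w). \]
Every non-zero parallel pair is uniquely of the form $(k_1 v_0, k_2 v_0)$ with $v_0$ ranging over the set $P\subset\Z^3$ of primitive vectors normalised so that the first non-zero coordinate is positive, and with $k_1,k_2\in\Z\setminus\{0\}$. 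Hence $C^*=\sum_{v_0\in P}G(v_0)^2$ where $G(v_0):=\sum_{k\neq 0}F(kv_0)$; for $v_0=(x_0,y_0,z_0)\in P$ and $a=|x_0|, b=|y_0|, c=|z_0|$, a direct summation gives the closed form
\[ G(v_0)=2(2N+1)K_0-cK_0(K_0+1),\qquad K_0:=\min\bigl(\lfloor N/a\rfloor,\lfloor N/b\rfloor,\lfloor 2N/c\rfloor\bigr). \]

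It remains to evaluate $\sum_{v_0\in P}G(v_0)^2$ asymptotically with sufficient precision. The strategy is to split the region of summation according to whether $K_0$ is cut off by $\max(a,b)$ or by $c$, expand the square of $G(v_0)$ in each region as a polynomial in $(a,b,c,N)$, and then sum these polynomials over primitive triples. The primitivity constraint $\gcd(v_0)=1$ is removed by M\"obius inversion $\mathbf{1}[\gcd(v_0)=1]=\sum_{d\mid\gcd(v_0)}\mu(d)$; after the rescaling $v_0=dw$, the Dirichlet series $\sum_d\mu(d)/d^3=1/\zeta(3)$ appears, while the internal coordinate sums of type $\sum_{n\leq M}n^{-2}=\zeta(2)-O(M^{-1})$ produce the $\zeta(2)$ in the target constant $K=10\zeta(2)/(3\zeta(3))$.

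The hard part will be obtaining the error term $O(N^4\log N)$ while correctly evaluating the main coefficient. Several distinct pieces in the expansion of $G(v_0)^2=\bigl(2(2N+1)K_0-cK_0(K_0+1)\bigr)^2$ contribute at order $N^5$, so a crude replacement $K_0\approx N/\max(a,b)$ would discard genuine main-term contributions; one must instead work with the relation $K_0=\lfloor N/\max(a,b)\rfloor$ (or the analogous one when the $c$-cutoff is active) throughout, and carefully control the $O(1)$ rounding errors as they propagate through the M\"obius/zeta-value computation. The $\log N$ factor in the error appears through the lower-dimensional strata of $P$ on which one or two coordinates of $v_0$ vanish: for example, primitive vectors of the form $(x_0,y_0,0)$ give rise to inner sums of the shape $\sum_{n\leq N}1/n$. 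This kind of sharp second-moment control for the relevant lattice-point counts is precisely what the abstract refers to as ``restricted divisor correlations with high uniformity'', and constitutes the principal technical input of the proof.
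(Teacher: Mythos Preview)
Your reduction is the same as the paper's: $AB=BA$ is equivalent to the $2\times 3$ matrix with rows $v_A,v_B$ having rank $\leq 1$. Where you diverge is in the parametrisation. The paper works with the \emph{column} picture---the three 2D vectors $(a_2,b_2),(a_3,b_3),(a_4-a_1,b_4-b_1)$ lie on a common line---and parametrises by a primitive direction $(u,v)\in\Z^2$ together with the three scalar multiples along it; this leads (after Lemmas~2.1 and~2.2) to an Euler-product sum $\sum_{u\geq 2}\phi(u)/u^3=\zeta(2)/\zeta(3)-1$, from which the constant $K$ drops out with essentially no further work. You instead work with the \emph{row} picture and parametrise by a primitive direction $v_0\in\Z^3$ together with the two scalar multiples $k_1,k_2$; this gives the clean identity $C^*=\sum_{v_0\in P}G(v_0)^2$ and recovers $\zeta(2)/\zeta(3)$ via a 3D M\"obius inversion. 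Your route is more symmetric (no case split on which off-diagonal entries vanish) and makes the structure $C^*=\|G\|_{\ell^2(P)}^2$ transparent, at the price of a messier asymptotic: the box is anisotropic ($|x_0|,|y_0|\leq N$ but $|z_0|\leq 2N$), so $K_0$ switches between two regimes, and the expansion of $G(v_0)^2$ has several $N^5$-order pieces that must be tracked through the M\"obius sum. The paper's 2D parametrisation sidesteps this by making the $c$-variable (your $z_0$) the \emph{dependent} one via Lemma~2.2, so only a single regime $K_0=\lfloor N/\max(|u|,|v|)\rfloor$ ever appears.

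One small correction: the ``restricted divisor correlations with high uniformity'' mentioned in the abstract are Lemma~1.5 and are used only for the $3\times 3$ result (Theorem~1.1). The $2\times 2$ asymptotic is handled entirely by the elementary counting Lemmas~2.1--2.2, and your approach likewise needs nothing beyond careful bookkeeping---no divisor-sum input is required here.
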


A notable aspect of our estimates is that we are able to circumvent logarithmic losses. In fact, a naive application of the circle--method heuristic, which interprets the leading term as arising from the product of `local densities', would suggest the presence of some logarithmic factors in the main term in Theorem \ref{thm1.2}.
To elaborate on this, for every prime $p$ and positive integer $n$, we define $\Mat_2(\mathbb{Z}/p^n \mathbb{Z})$ to be the set of all $2 \times 2$ matrices with entries lying in $\mathbb{Z}/p^n \mathbb{Z}$ and set
\[ \mathfrak{C}_2(\mathbb{Z}/p^n \mathbb{Z}) = |\{(A,B) \in {\rm Mat}_2(\mathbb{Z}/p^n \mathbb{Z})^2 : AB = BA  \} |.  \]
We prove the following asymptotic formula for $\mathfrak{C}_2(\mathbb{Z}/p^n \mathbb{Z})$.

\begin{theorem}\label{thm1.3}
    For any prime $p$ and positive integer $n$, one has
    \[ \frac{\mathfrak{C}_2(\mathbb{Z}/p^n \mathbb{Z}) }{p^{6n}}= \left(1+\frac{1}{p}\right)\left(1-\frac{1}{p^2}\right)^{-1}\left(1-\frac{1}{p^3}\right)\left(1-\frac{1}{p^{2\left\lceil n/2\right\rceil}}\right) + O(n^2p^{-n/2}). \]
    In particular, the limit
    \begin{equation*}
        \sigma_p = \lim_{n\to\infty} \frac{\mathfrak{C}_2(\mathbb{Z}/p^n \mathbb{Z}) }{p^{6n}} = \left(1+\frac{1}{p}\right)\left(1-\frac{1}{p^2}\right)^{-1}\left(1-\frac{1}{p^3}\right)
    \end{equation*}
    exists.
\end{theorem}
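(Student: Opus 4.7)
The plan is to translate $AB = BA$ into a rank-$\le 1$ condition on a certain $2\times 3$ matrix over $\Z/p^n\Z$, and then count such matrices directly by stratifying on the $p$-adic valuation of the first row. This should yield a closed-form expression for $\mathfrak{C}_2(\Z/p^n\Z)$ from which both the main term and the error bound follow.

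First, a direct expansion of $AB - BA$ with $A = (a_{ij})$ and $B = (b_{ij})$ shows that the three independent entries of $AB - BA$ are, up to sign, the three $2\times 2$ minors of
\[ M(A,B) = \begin{pmatrix} a_{11} - a_{22} & a_{12} & a_{21} \\ b_{11} - b_{22} & b_{12} & b_{21} \end{pmatrix}. \]
Writing $v = (v_1,v_2,v_3)$ and $w = (w_1,w_2,w_3)$ for the two rows, each admissible pair $(v,w)$ lifts to exactly $p^{2n}$ pairs $(A,B)$, by freely choosing $a_{11},b_{11}$ and setting $a_{22} = a_{11} - v_1$, $b_{22} = b_{11} - w_1$. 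Hence $\mathfrak{C}_2(\Z/p^n\Z) = p^{2n} \cdot \mathcal{N}_n$, where $\mathcal{N}_n$ counts pairs $(v,w) \in (\Z/p^n\Z)^6$ satisfying $v_iw_j = v_jw_i$ for all $1 \le i < j \le 3$.

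To evaluate $\mathcal{N}_n$, I would stratify by $k = \min_i v_p(v_i) \in \{0,\ldots,n\}$. For $k < n$, write $v = p^k \tilde v$ with $\tilde v \in (\Z/p^{n-k}\Z)^3$ primitive (some coordinate a unit); such $v$ number $p^{3(n-k)}(1-p^{-3})$, and there is a single $v$ (namely $0$) corresponding to $k = n$. The minor equations become $\tilde v_i w_j \equiv \tilde v_j w_i \pmod{p^{n-k}}$, and so depend only on $w \bmod p^{n-k}$. Primitivity of $\tilde v$ lets us invert a unit coordinate and solve linearly, yielding $w \equiv c\tilde v \pmod{p^{n-k}}$ for a free parameter $c \in \Z/p^{n-k}\Z$, with the third equation automatic. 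This gives $p^{n-k}$ residue classes, each admitting $p^{3k}$ lifts to $(\Z/p^n\Z)^3$, so $p^{n+2k}$ valid $w$'s (respectively $p^{3n}$ when $v=0$). Summing the resulting geometric series,
\[ \mathcal{N}_n = p^{3n} + (1 - p^{-3}) p^{4n} \cdot \frac{1 - p^{-n}}{1 - p^{-1}}, \]
and hence
\[ \frac{\mathfrak{C}_2(\Z/p^n\Z)}{p^{6n}} = p^{-n} + \frac{(1 - p^{-3})(1 - p^{-n})}{1 - p^{-1}}. \]
A brief algebraic check compares this to $(1 + p^{-1})(1 - p^{-2})^{-1}(1 - p^{-3})(1 - p^{-2\lceil n/2\rceil}) = (1 - p^{-3})(1 - p^{-2\lceil n/2\rceil})/(1-p^{-1})$: the difference is exactly $p^{-n}$ when $n$ is even and $p^{-n-3}$ when $n$ is odd, in either case well inside the $O(n^2p^{-n/2})$ tolerance, and the asymptotic value of $\sigma_p$ follows on sending $n \to \infty$.

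The one genuinely delicate point is the claim that the system $\tilde v_i w_j \equiv \tilde v_j w_i \pmod{p^{n-k}}$ has no extra solutions beyond the line $\{c\tilde v : c \in \Z/p^{n-k}\Z\}$: primitivity of $\tilde v$ (rather than any prescribed coordinate being a unit) is what is needed, and after solving the two equations involving the unit coordinate the remaining minor identity is automatic. Everything else reduces to careful bookkeeping on valuations and on lifts between $(\Z/p^{n-k}\Z)^3$ and $(\Z/p^n\Z)^3$; the fact that the answer is more precise than the theorem asserts is a pleasant by-product of the elementary nature of the count.
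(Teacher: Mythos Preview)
Your argument is correct, and in fact yields the exact identity
\[
\frac{\mathfrak{C}_2(\Z/p^n\Z)}{p^{6n}} \;=\; p^{-n} + \frac{(1-p^{-3})(1-p^{-n})}{1-p^{-1}},
\]
which is strictly sharper than the statement of the theorem. Your route differs from the paper's in one essential respect: you stratify \emph{asymmetrically}, on the minimum valuation $k$ of the first row $v$ alone, whereas the paper stratifies symmetrically on the minimum valuation $h$ of all six variables. The paper's symmetric choice forces it to invoke a homogeneous ``lifting lemma'' (reducing $S(n,h)$ to $S(n-2h,0)$, valid only for $h<n/2$), to compute $|S(n,0)|$ by inclusion--exclusion over which pair $(a_j,b_j)$ is primitive, and then to bound the leftover solutions with $h\ge n/2$ by a separate case analysis that produces the $O(n^2 p^{-n/2})$ error. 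Your asymmetric stratification sidesteps all of this: once $\tilde v$ is primitive, a single unit coordinate already linearises the system for $w$, the full range $0\le k\le n$ is handled uniformly, and the geometric sum closes exactly. The trade-off is that the paper's lifting lemma is stated in a form that generalises to arbitrary homogeneous systems, whereas your argument is tailored to this specific rank-one problem; for the purpose at hand, your approach is both shorter and stronger.
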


The divergent product $\prod_p(1+p^{-1})$ appearing in $\prod_p \sigma_p$ suggests a logarithmic factor should be present in the main term in \eqref{eqn1.4}. This discrepancy can be explained in various ways, for instance, one would expect the circle method heuristic to hold when one expects that the solutions contributing to the main term in \eqref{eqn1.4} are somewhat equidistributed modulo every prime $p$. This does not hold in our case since a significant contribution to the main term in \eqref{eqn1.4} arises from the case when one of the matrices is a scalar multiple of the identity matrix while the other matrix is arbitrary. In geometric terms, the failure of the circle method heuristic is a consequence of the fact that the variety defined by the matrix equation $AB=BA$ is not a complete intersection.

Given the estimates provided by Theorems \ref{thm1.1} and \ref{thm1.2} as well as the lower bound in \eqref{eqn1.1}, it is natural to wonder whether one should expect to have 
\[ \mathfrak{C}_d(N) - 2(2N+1)^{d^2+1} \gg_d N^{d^2 + 1}\] 
when $d \geq 3$ or whether there is a paucity of non-trivial solutions. A relatively straightforward computation shows that the former is indeed the case.

\begin{lemma} \label{lem1.4}
For every $d \geq 2$, one has
\[ \mathfrak{C}_d(N) \geq \bigg(2 + \frac{2}{d+1} - O_d(N^{-1})\bigg)(2N+1)^{d^2 + 1}. \]
\end{lemma}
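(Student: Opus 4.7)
The plan is to exhibit three explicit families of commuting pairs in $\Mat_d(\Z,N)^2$ whose union has the desired cardinality. The first two families are the scalar families $S_1=\{(A,B):A=aI\text{ for some }a\in\Z\}$ and $S_2=\{(A,B):B=bI\text{ for some }b\in\Z\}$, each of size $(2N+1)^{d^2+1}$, which together give the leading $2(2N+1)^{d^2+1}$ already featured in \eqref{eqn1.1}. The new ingredient is the family
\[ T = \{(A,B)\in\Mat_d(\Z,N)^2 : B-A=cI \text{ for some } c\in\Z\}. \]
Every pair in $T$ commutes, since if $B=A+cI$ then $AB=A^2+cA=BA$. I expect $T$ alone to contribute roughly $\frac{2}{d+1}(2N+1)^{d^2+1}$ pairs, which is exactly the extra mass demanded by the lemma.

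The core computation is the size of $T$. For each $c\in[-2N,2N]\cap\Z$, the number of $A\in\Mat_d(\Z,N)$ with $A+cI\in\Mat_d(\Z,N)$ equals $(2N+1)^{d^2-d}(2N+1-|c|)^d$, since the off-diagonal entries of $A$ are unconstrained while each diagonal entry $A_{ii}$ must satisfy both $|A_{ii}|\leq N$ and $|A_{ii}+c|\leq N$. Summing and substituting $k=2N+1-|c|$, I obtain
\[ |T| \;=\; (2N+1)^{d^2-d}\!\left((2N+1)^d + 2\sum_{k=1}^{2N} k^d\right). \]
The classical power-sum asymptotic $\sum_{k=1}^{2N}k^d = \frac{(2N+1)^{d+1}}{d+1}(1-O_d(N^{-1}))$ then yields
\[ |T| \;=\; \frac{2}{d+1}(2N+1)^{d^2+1}\bigl(1-O_d(N^{-1})\bigr) + O_d(N^{d^2}). \]

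Finally I apply inclusion-exclusion to $|S_1\cup S_2\cup T|$. The pairwise intersections are easy to compute: $S_1\cap S_2$, $S_1\cap T$ and $S_2\cap T$ each consist of pairs of scalar matrices $(aI,bI)$ with $|a|,|b|\leq N$, hence have size exactly $(2N+1)^2$, and the triple intersection equals $S_1\cap S_2$. Therefore
\[ |S_1\cup S_2\cup T| = 2(2N+1)^{d^2+1} + |T| - 2(2N+1)^2, \]
and inserting the estimate for $|T|$ produces the bound in the lemma. The only genuinely quantitative step is the power-sum asymptotic above; everything else is bookkeeping, so I do not anticipate any serious obstacle beyond carefully tracking the $O_d(N^{-1})$ error through the inclusion-exclusion.
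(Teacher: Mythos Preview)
Your proposal is correct and follows essentially the same approach as the paper: both arguments exhibit the family $T=\{(A,A+cI)\}$ of commuting pairs and compute its size via the power-sum $\sum_{k}k^{d}\approx\frac{(2N)^{d+1}}{d+1}$, then combine it with the two scalar families. The only cosmetic difference is in how the overlap with the scalar families is handled: the paper restricts $T$ to pairs whose off-diagonal entries are all non-zero (so that this subfamily is automatically disjoint from $S_1\cup S_2$), whereas you keep $T$ intact and apply inclusion--exclusion, correctly noting that $S_1\cap T=S_2\cap T=S_1\cap S_2$ consists precisely of the $(2N+1)^2$ pairs of scalar matrices.
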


In contrast to the work of Browning--Sawin--Wang \cite{BSW2024}, which involved a variety of matrix identities, ideas from harmonic analysis and estimates concerning exponential sums in finite fields, and the work of the second author \cite{Mu2024}, which employed ideas relating to sum-product estimates, growth in groups phenomenon and incidence geometry, our techniques are purely elementary. Perhaps somewhat naturally, an important ingredient in the proof of Theorem \ref{thm1.1} involves studying higher moments of restricted divisor estimates. To elucidate this further, we define, for every $h \in \mathbb{Z}$, the function
\[ r(h) = r_N(h) := \sum_{-N\leq a_1, \dots, a_4 \leq N} \mathds{1}_{a_1a_2 - a_3 a_4 = h}. \]
We suppress the dependence of $r$ on $N$ in the notation $r(h)$ since the choice of $N$ will always be clear depending on the context. For any $k\geqslant 2$, we set 
\[ I_k(N) = \sum_{h \in \mathbb{Z}} r(h)^k .\]
Observe that $I_k(N)$ denotes the number of integer solutions to the system of equations
\[ x_1 x_2 - x_3 x_4 = \dots = x_{4k-3}x_{4k-2} - x_{4k-1}x_{4k} \]
with $|x_i| \leq N$ for every $1 \leq i \leq 4k$. Thus, $I_k(N)$ can be interpreted as the number of $k$-tuples $(A_1, \dots, A_k) \in {\rm Mat}_2(\mathbb{Z}, N)^k$ such that $\det(A_1) = \dots = \det(A_k)$. 

As part of our proof of Theorem \ref{thm1.1}, we derive the following pointwise estimates for the function $r$ and its higher moments.

\begin{lemma} \label{lem1.5}
For any $0 < |h| \leq 2N^2$, we have
    \begin{equation} \label{eqn1.5}
    r(h) \ll N^2 \sum_{\substack{d|h,\\  1 \leq d \leq N}} \frac{1}{d} \ \ \text{and} \ \ r(0) \ll N^2 \log N. 
    \end{equation}
Moreover, for any integer $k \geq 2$, we have
\begin{equation} \label{eqn1.6}
    I_k(N) \ll_k N^{2k + 2}. 
\end{equation} 
\end{lemma}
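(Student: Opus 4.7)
My plan is to handle all three bounds through a single lattice-point argument, parametrising solutions by $g = \gcd(|a_1|, |a_3|)$. For $r(h)$ with $h \neq 0$, the degenerate subcases $a_1 = 0$ or $a_3 = 0$ reduce to counting divisors of $h$: if $a_1 = 0$, then $a_3 a_4 = -h$ forces $a_3 \in \{d \mid h : |d| \leq N\}$ and determines $a_4$, while $a_2$ is free, for a contribution $\ll N \cdot |\{d \mid h : 1 \leq d \leq N\}| \leq N^2 \sum_{d \mid h,\, d \leq N} 1/d$ (using $N/d \geq 1$). For the main case $a_1, a_3 \neq 0$, I write $a_1 = g u$ and $a_3 = g v$ with $\gcd(u, v) = 1$, so the equation rearranges to $g(u a_2 - v a_4) = h$, forcing $g \mid h$. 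For each fixed coprime $(u, v)$ with $uv \neq 0$, the $\Z^2$-solutions of $u a_2 - v a_4 = h/g$ form a coset of the rank-one lattice $\Z \cdot (v, u)$, so at most $2N/\max(|u|, |v|) + 1$ of them lie in $[-N, N]^2$. Grouping coprime pairs by $m = \max(|u|, |v|) \leq N/g$ (each value of $m$ occurring for at most $8m$ pairs) bounds the contribution at each $g$ by $\ll N^2/g$, and summing over $g \mid h$ with $g \leq N$ yields $r(h) \ll N^2 \sum_{d \mid h,\, d \leq N} 1/d$.

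For $r(0)$ the same parametrisation gives $u a_2 = v a_4$, and coprimality forces $(a_2, a_4) = (v t, u t)$ for some $t \in \Z$ with $|t| \leq N/\max(|u|, |v|)$. The per-$g$ bound is again $\ll N^2/g$, but now $g$ ranges over all of $[1, N]$ with no divisibility constraint, producing the harmonic sum $r(0) \ll N^2 \sum_{g=1}^N 1/g \ll N^2 \log N$; the degenerate subcases $a_1 = 0$ or $a_3 = 0$ contribute only $O(N^2)$.

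For the moment estimate \eqref{eqn1.6}, I split $I_k(N) = r(0)^k + \sum_{h \neq 0} r(h)^k$ and note that $r(0)^k \ll N^{2k}(\log N)^k = o(N^{2k+2})$. For the tail I substitute \eqref{eqn1.5}, expand the $k$-th power, and swap summations:
\[ \sum_{0 < |h| \leq 2N^2} \Big(\sum_{d \mid h,\, d \leq N} \frac{1}{d}\Big)^{k} = \sum_{d_1, \ldots, d_k \leq N} \frac{1}{d_1 \cdots d_k} \Big|\{h : 0 < |h| \leq 2N^2,\ \lcm(d_1, \ldots, d_k) \mid h\}\Big|. \]
Bounding the cardinality by $4 N^2 / \lcm(d_1, \ldots, d_k)$ reduces matters to the multiplicative sum
\[ S_k := \sum_{d_1, \ldots, d_k \geq 1} \frac{1}{d_1 \cdots d_k \cdot \lcm(d_1, \ldots, d_k)}, \]
whose finiteness is immediate from $\lcm(d_1, \ldots, d_k) \geq (d_1 \cdots d_k)^{1/k}$, giving $S_k \leq \zeta(1 + 1/k)^k < \infty$. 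This yields $\sum_{h \neq 0} r(h)^k \ll_k N^{2k+2}$, completing the bound. The only mildly delicate step throughout is the summation over coprime $(u, v)$ in the first paragraph; the rest is routine divisor bookkeeping, and the multiplicative sum $S_k$ converges by inspection.
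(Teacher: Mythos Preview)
Your proof is correct and follows essentially the same approach as the paper's: parametrising by $g=\gcd(|a_1|,|a_3|)$ and counting $(a_2,a_4)$ as lattice points on a line is equivalent to the paper's congruence argument for $a_4\pmod{a_1/(a_1,a_3)}$, and your bound $\lcm(d_1,\dots,d_k)\geq (d_1\cdots d_k)^{1/k}$ for the moment estimate is a minor variant of the paper's $\lcm\geq\max_i d_i$ (both yield a convergent sum). The only cosmetic differences are your grouping by $m=\max(|u|,|v|)$ rather than summing over $a_1$ directly, and your closed-form constant $\zeta(1+1/k)^k$ in place of the paper's $\sum_{d}d^{-2}(\log d)^{k-1}$.
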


The estimates in \eqref{eqn1.5} are sharp up to the implicit constant in the Vinogradov notation for many values of $h$, as can be seen by comparing these upper bounds with the asymptotic formulae present in \cite{Af2024, DRS1993, GG2024}. Moreover, applying H\"{o}lder's inequality, we see that
\[ I_k(N) = \sum_{|h| \leq 2N^2} r(h)^k \geq \frac{(\sum_{|h| \leq 2N^2} r(h))^k}{(2N^2+1)^{k-1}}  = \frac{(2N+1)^{4k}}{(2N^2+1)^{k-1}} \gg_k N^{2k + 2}, \]
whence \eqref{eqn1.6} is also sharp up to the implicit constant.

There is a substantial body of research on $r(h)$ along with its many variants and generalisations. For instance, when $h$ is fixed and $d\geq 2$, work of Duke--Rudnick--Sarnak \cite{DRS1993} provides an asymptotic formula for all matrices $A \in {\rm Mat}_d(N, \mathbb{Z})$ such that $\det(A) = h$. One particularly well-studied variant of $r(h)$ is the classical divisor correlation 
\[  D_X(h):= \sum_{\substack{a_1, \dots, a_4 \in \mathbb{N} \\ a_1a_2, a_3a_4 \leq X}} \mathds{1}_{a_1 a_2 - a_3 a_4 = h} = \sum_{1 \leq n \leq X} \tau(n) \tau(n+h), \]
where $\tau$ is the divisor function. Obtaining an asymptotic formula for $D_X(h)$, with strong error terms and a large range of uniformity in $h$ with respect to $X$, has been an important topic of study in analytic number theory, in part due its connections to the fourth moment of the Riemann Zeta function. For details on the rich history of this problem, see \cite{HB1979, Meu2001, Mo1994} and the references therein. 

While trivially one has
\[ r_N(h) \ll D_{N^2}(h), \]
it is known that for $|h| \ll N^{5/3}$, the bound $D_{N^2}(h) \gg c_h N^2 (\log N)^2$ holds for some $c_h \gg 1$ (see \cite{HB1979}). Thus, even the sharpest estimates for $D_{N^2}(h)$ are unfortunately not strong enough to recover \eqref{eqn1.5}. It is worth mentioning that asymptotic formulae for $r(h)$ have been proven recently in \cite{GG2024} and \cite{Af2024}, but these only hold when $|h| \ll N^{1/3}$ and $|h| \ll_{\eps} N^{2 - \eps}$ for all $\eps >0$ respectively. On the other hand, while variants of $r(h)$ that incorporate smooth weights supported on dyadic intervals seem to be more amenable to analysis, with asymptotic formulae for such objects being known to hold even when $|h| \leq 2N^2$, see \cite[Theorem 1.3]{GG2024} and the ensuing discussion therein, these exhibit slightly different behaviour compared to $r(h)$ and thus do not seem to suffice for our analysis. Although it is plausible that \eqref{eqn1.5} can be recovered via the aforementioned methods, for completeness, we provide a short, elementary proof of Lemma \ref{lem1.5} in \S3. 

We conclude this section by mentioning the case when $[-N,N]\cap \Z$ is replaced by an arbitrary additively structured set $\cA \subset \mathbb{R}$. Thus, defining the sumset $\cA + \cA$ and the doubling ${\rm K}$ as
\[ \cA + \cA = \{ a + b : a , b\in \cA\} \ \ \text{and} \ \ {\rm K} = |\cA + \cA|/|\cA|,  \]
one can consider the number $\mathfrak{C}_d(\cA)$ of pairs of commuting $d \times d$ matrices such that the entries of these matrices arise from $\cA$. In \cite{Mu2024}, it was shown, amongst other estimates, that 
\begin{equation}  \label{eqn1.7}
{\rm K}^{-1} |\cA|^{5} \ll  \mathfrak{C}_2(\cA) \ll |\cA|^5 {\rm K}^{4/3}. 
\end{equation}
Note that when $\cA = [-N,N]\cap\Z$, we have $1 \leq {\rm K} \leq 2$, and so, this recovers an upper bound of the right order in this setting. Combining the techniques involved in the proof of Theorem \ref{thm1.1} along with ideas from \cite{Mu2024, So2009}, we are able to prove a corresponding estimate when $d=3$.

\begin{theorem} \label{thm1.6}
    Let $\cA \subset \mathbb{R}$ be a finite, non-empty set such that $|\cA+ \cA|/|\cA| = {\rm K}$ for some ${\rm K} \geq 1$. Then 
    \[  {\rm K}^{-3}|\cA|^{10} \ll  \mathfrak{C}_3(\cA) \ll {\rm K}^6 |\cA|^{10} (\log (2|\cA|))^3 \]
\end{theorem}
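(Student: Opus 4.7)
\textbf{Proof plan for Theorem \ref{thm1.6}.}

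For the lower bound, the plan is to count commuting pairs in $\Mat_3(\cA)^2$ of the form $(A, A + kI)$, where $A \in \Mat_3(\cA)$ and $k \in \R$ is chosen so that $A + kI \in \Mat_3(\cA)$; such pairs trivially commute. Since adding $kI$ does not change the off-diagonal entries, the condition $A + kI \in \Mat_3(\cA)$ only constrains the diagonal shifts, and a direct parametrization shows that the count equals $|\cA|^6 \sum_{k \in \R} f(k)^3$, where $f(k) = |\cA \cap (\cA - k)|$ and the factor $|\cA|^6$ accounts for the six off-diagonal entries of $A$. By Cauchy--Schwarz,
\[ \sum_k f(k)^3 \geq \frac{\left( \sum_k f(k)^2 \right)^2}{\sum_k f(k)} = \frac{\left( \sum_k f(k)^2 \right)^2}{|\cA|^2}, \]
and a further application of Cauchy--Schwarz to the sumset representation function gives $\sum_k f(k)^2 \geq |\cA|^4/|\cA+\cA| = |\cA|^3/{\rm K}$. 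Combining these inequalities yields $\mathfrak{C}_3(\cA) \gg {\rm K}^{-2}|\cA|^{10}$, a bound stronger than the claimed ${\rm K}^{-3}|\cA|^{10}$.

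For the upper bound, the plan is to adapt the proof of Theorem \ref{thm1.1}, which reduces the counting problem to pointwise and higher-moment estimates on the restricted divisor correlation $r_N(h)$ from Lemma \ref{lem1.5}. The corresponding $\cA$-variants are
\[ r_{\cA}(h) = \sum_{a_1, a_2, a_3, a_4 \in \cA} \mathds{1}_{a_1 a_2 - a_3 a_4 = h} \qquad \text{and} \qquad I_k(\cA) = \sum_{h \in \Z} r_{\cA}(h)^k. \]
The pointwise bound on $r_\cA(h)$ is obtained by emulating the divisor-function argument of \S 3, using Pl\"unnecke--Ruzsa to control the iterated sumsets that appear, which costs a small polynomial factor in ${\rm K}$. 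For the higher moment $I_k(\cA)$, we invoke the Solymosi-type bound from \cite{So2009}, which efficiently controls quadruples of the form $a_1 a_2 = a_3 a_4$ in sets of small additive doubling. Feeding these ${\rm K}$-controlled inputs into the scheme of Theorem \ref{thm1.1} and summing the individual losses yields the claimed $\mathfrak{C}_3(\cA) \ll {\rm K}^6 |\cA|^{10} (\log(2|\cA|))^3$.

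The main obstacle is tracking the precise ${\rm K}$-exponent through the upper bound argument. A naive propagation of Pl\"unnecke--Ruzsa estimates through the higher moments $I_k(\cA)$ would yield a ${\rm K}$-exponent growing with $k$, so the proof must invoke the Solymosi estimate as a single packaged input to bypass iterated sumset expansions; a similar caution applies to the pointwise divisor-type bounds, which must be applied sparingly in order to keep the total loss at ${\rm K}^6$.
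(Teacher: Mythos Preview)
Your overall strategy matches the paper: the lower bound via pairs $(A,A+kI)$ and the upper bound via the rank-of-$M$ decomposition of \S\ref{sec4} are exactly what the paper does. Your lower-bound computation is correct and in fact gives $K^{-2}|\cA|^{10}$, slightly better than the stated $K^{-3}|\cA|^{10}$; the paper leaves this direction essentially implicit.

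There is, however, a genuine problem in your plan for the pointwise bound on $r_{\cA}(h)$. You propose to ``emulate the divisor-function argument of \S\ref{sec3}, using Pl\"unnecke--Ruzsa to control the iterated sumsets that appear''. But the proof of Lemma~\ref{lem1.5} in \S\ref{sec3} relies on genuinely arithmetic features of $[-N,N]\cap\Z$ --- writing $d=\gcd(a_1,a_3)$, using $d\mid h$, and counting residues modulo $a_1/d$ --- none of which have any analogue for an arbitrary finite set $\cA\subset\R$. Pl\"unnecke--Ruzsa controls additive growth, not divisor structure, so it cannot substitute for these steps. This part of your plan would not go through.

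The fix is much simpler than what you propose, and it is what the paper actually does (Lemma~\ref{lem6.1}). One applies Cauchy--Schwarz in the obvious way to get $r_{\cA}(h)\leq r_{\cA}(0)$ uniformly in $h$, and then invokes Solymosi's sum-product bound directly to obtain $r_{\cA}(0)\ll K^2|\cA|^2\log(2|\cA|)$. The moment bound $I_3(\cA)\ll K^4|\cA|^8(\log(2|\cA|))^2$ then follows trivially from $I_3(\cA)\leq (\sup_h r_{\cA}(h))^2\sum_h r_{\cA}(h)$. So Solymosi is the input for \emph{both} the pointwise and the moment estimates, not just the latter. With these in hand, the paper treats the cases $2\leq\operatorname{rank}(M)\leq 3$ more crudely than in \S\ref{sec4}: rather than the scalar-multiple analysis of Lemma~\ref{lem4.3}, it simply bounds the off-diagonal choices by $r_{\cA}(0)^3\ll K^6|\cA|^6(\log(2|\cA|))^3$, and this is exactly where the $K^6(\log(2|\cA|))^3$ in the final bound comes from.
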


As in \eqref{eqn1.7}, the above is optimal up to the $O({\rm K}^6 (\log (2|\cA|))^3)$ factor. Moreover, this recovers the estimates in Theorem \ref{thm1.1} up to the $O( (\log 2N)^3)$ factor. Our proof of Theorem \ref{thm1.6} essentially follows the proof of Theorem \ref{thm1.1}, with restricted divisor correlations being estimated in this setting via work of Solymosi \cite{So2009} on the sum-product conjecture; see Lemma \ref{lem6.1}.

\subsection*{Organisation} In \S\ref{sec2}, we perform an analysis of $2 \times 2$ commuting matrices, thus proving Theorem \ref{thm1.2}. In \S\ref{sec3}, we study restricted divisor correlations and provide a proof of Lemma \ref{lem1.5}. We employ these along with various elementary arguments to prove Theorem \ref{thm1.1} in \S\ref{sec4}. In \S\ref{sec5}, we compute the number of $2 \times 2$ commuting matrices over $\Z/p^n \Z$ and, consequently, prove Theorem \ref{thm1.3}. Finally, we dedicate \S\ref{sec6} to proving Lemma \ref{lem1.5} and Theorem \ref{thm1.6} and for presenting some further remarks about commuting matrices.

\subsection*{Notation} We employ Vinogradov notation, that is, we write $Y \ll_{z} X$, or equivalently $Y =O_z(X)$, to mean that $|Y| \leq C_z X$, where $C_z>0$ is some constant depending on the parameter $z$. For any non-empty set $X$ and any positive integer $k$, we write $X^k = \{(x_1, \dots, x_k) : x_1, \dots, x_k \in X\}$. Given $\lambda \in \mathbb{R}$ and $(x_1, \dots, x_k) \in \mathbb{R}^k$, we define the vector $\lambda \cdot (x_1, \dots, x_k ) = (\lambda x_1, \dots, \lambda x_k)$. We usually denote the greatest common divisor of two integers $a$ and $b$ by $\gcd(a,b)$. We may also write $(a,b)$ in place of $\gcd(a,b)$ when it is clear from the context that we are examining the greatest common divisor rather than a vector with two entries. Given a property $\mathbf{P}$, we define $\mathds{1}_\mathbf{P}$ to be $1$ if $\mathbf{P}$ holds and $0$ otherwise. For example, given variables $x$ and $y$, we have $\mathds{1}_{x=y}=1$ if and only if $x=y$.

\subsection*{Acknowledgements} We are grateful to Tim Browning for introducing us to this problem during his talk at the University of Warwick Number Theory Seminar. We thank Sam Chow, Jori Merikoski, and Simon Rydin Myerson for many helpful discussions. JC is supported by EPSRC through Joel Moreira's Frontier Research Guarantee grant, ref. \texttt{EP/Y014030/1}.

\subsection*{Rights}

For the purpose of open access, the authors have applied a Creative Commons Attribution (CC-BY) licence to any Author Accepted Manuscript version arising from this submission.

\section{Commuting pairs of \texorpdfstring{$2\times 2$}{2 x 2} matrices}\label{sec2}

The purpose of this section is to prove Theorem \ref{thm1.2} and establish an asymptotic formula for $\mathfrak{C}_2(N)$. Given $(A,B) \in {\rm Mat}_2(\mathbb{Z},N)^2$, we write
\begin{equation*}
    A =
    \begin{pmatrix}
        a_1 & a_2\\
        a_3 & a_4
    \end{pmatrix},
    \qquad B=
    \begin{pmatrix}
        b_1 & b_2\\
        b_3 & b_4
    \end{pmatrix}.
\end{equation*}
Since the trace of $AB-BA$ is always zero, the diagonal entries of $AB-BA$ are both zero if and only if at least one of them is zero. This observation demonstrates that the matrix equality $AB=BA$ can be expressed as the following system of three equations:
\begin{align} 
    a_{2}b_{3} &= a_{3}b_{2}; \nonumber\\
    a_{2}(b_{4} - b_{1}) &= b_{2}(a_{4} - a_{1});\label{eqn2.1}\\
    a_{3}(b_{4} - b_{1}) &= b_{3}(a_{4} - a_{1}).\nonumber
\end{align}
One can interpret this system as the statement that the three vectors $(a_4 - a_1,b_4-b_1)$, $(a_2,b_2)$, $(a_3,b_3)$ span a subspace of $\Q^2$ (or $\R^2$) of dimension at most $1$.

We begin with some preliminary elementary lemmas on counting solutions to each individual equation appearing in the above system.

\begin{lemma}\label{lem2.1}
    Let $N\geqslant 1$, let $u,w$ be coprime, non-zero integers and let $m=\max\{|u|,|v|\}$. Then the number of solutions $(x,y)\in ([-N,N]\cap\Z)^2$ to the equation
    \begin{equation}\label{eqn2.2}
        ux = vy \neq 0
    \end{equation}
    is equal to $ 2\lfloor N/m\rfloor$.
\end{lemma}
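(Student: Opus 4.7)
The plan is to parametrise all integer solutions to $ux=vy$ using coprimality, and then count the parameter values compatible with the box constraints.

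First I would observe that since $\gcd(u,v)=1$ and $ux=vy$, the relation $v\mid ux$ forces $v\mid x$. Writing $x=vt$ for some integer $t$, the equation $ux=vy$ rearranges to $uvt=vy$, and since $v\neq 0$ I can cancel to deduce $y=ut$. Conversely, every pair of the form $(x,y)=(vt,ut)$ with $t\in\Z$ satisfies $ux=vy$. Thus the solutions to the unrestricted equation $ux=vy$ are in bijection with $\Z$ via the parameter $t$.

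Next I would impose the two remaining constraints. The nonvanishing condition $ux\neq 0$ is equivalent to $t\neq 0$, since $u$ and $v$ are nonzero. The box condition $(x,y)\in([-N,N]\cap\Z)^2$ becomes $|v||t|\leq N$ and $|u||t|\leq N$, which combine into the single inequality $|t|\leq N/m$ with $m=\max\{|u|,|v|\}$. The number of nonzero integers $t$ satisfying $|t|\leq N/m$ is exactly $2\lfloor N/m\rfloor$, giving the desired count.

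There is no real obstacle here: the argument is a one-line parametrisation followed by counting a symmetric set of integers. The only point that requires a moment's care is ensuring that the bijection $t\mapsto(vt,ut)$ is injective, which follows immediately from $u,v\neq 0$ so that distinct values of $t$ produce distinct pairs $(x,y)$.
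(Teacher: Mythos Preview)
Your proof is correct and follows essentially the same approach as the paper: parametrise the solutions to $ux=vy$ as $(x,y)=(vt,ut)$ via coprimality of $u,v$, then count the nonzero $t$ with $|t|\leq N/m$. Your write-up is slightly more detailed (spelling out the divisibility step and injectivity), but the argument is identical in substance.
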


\begin{proof}
    Since $u$ and $v$ are coprime, equation \eqref{eqn2.2} holds if and only if there exists an integer $z$ such that $(x,y)=(vz,uz)$. Moreover, we have $0< |x|,|y| \leq N$ if and only if  $0<|mz|\leqslant N$, whence, the number of choices for $(x,y)\neq(0,0)$ is $2\lfloor N/m\rfloor$, as required.
\end{proof}

\begin{lemma}\label{lem2.2}
    Let $N\geqslant 1$, let $u,v$ be coprime non-zero integers and let $m=\max\{|u|,|v|\}$. Then the number of solutions $(a_1,a_2,b_1,b_2)\in ([-N,N]\cap\Z)^4$ to the equation
    \begin{equation}\label{eqn2.3}
        u(b_1 - b_2) = v(a_1 - a_2)
    \end{equation}
    is equal to
    \begin{equation*}
        \frac{4N^3}{3m^3}(12m^2 - 6m(|u|+|v|)+ 4|uv|) + O(N^2).
    \end{equation*}
\end{lemma}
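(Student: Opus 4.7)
The plan is to use the coprimality of $u$ and $v$ to parametrise solutions of \eqref{eqn2.3} by a single integer variable, and then evaluate the resulting one-variable sum by elementary manipulations.

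Since $\gcd(u,v)=1$, the equation $u(b_1-b_2)=v(a_1-a_2)$ holds if and only if there exists a (necessarily unique) integer $t$ with $a_1-a_2=ut$ and $b_1-b_2=vt$. For any $c\in\Z$, the number of pairs $(x,y)\in([-N,N]\cap\Z)^2$ with $x-y=c$ equals $\max(0,\,2N+1-|c|)$. Moreover, both the equation and the claimed formula are invariant under the simultaneous swap $(u,a_1,a_2)\leftrightarrow(v,b_1,b_2)$, so one may assume without loss of generality that $|u|=m\geqslant|v|$. Isolating the $t=0$ contribution and exploiting the symmetry $t\leftrightarrow -t$, the total count becomes
\begin{equation*}
(2N+1)^2 + 2\sum_{t=1}^{T}(2N+1-|u|t)(2N+1-|v|t), \qquad T:=\lfloor 2N/m\rfloor,
\end{equation*}
the sum truncating at $T$ because the $t$-summand vanishes once $|u|t>2N$.

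What remains is a direct computation. Expanding the product inside the sum yields three pieces of orders $1$, $t$, and $t^2$, which I evaluate via the standard identities $\sum_{t=1}^T t=T(T+1)/2$ and $\sum_{t=1}^T t^2=T(T+1)(2T+1)/6$. Writing $T=2N/m+O(1)$ and retaining only the cubic-in-$N$ contributions, the three main terms become $16N^3/m$, $-8N^3(|u|+|v|)/m^2$, and $16N^3|uv|/(3m^3)$, respectively. Collecting these under a common factor of $\tfrac{4N^3}{3m^3}$ produces exactly $\tfrac{4N^3}{3m^3}\bigl(12m^2-6m(|u|+|v|)+4|uv|\bigr)$, matching the claim.

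The only point requiring any care is that the $O(N^2)$ error be uniform in $u$ and $v$. This is not a serious obstacle: the lower-order terms dropped at each stage are bounded by multiples of $(|u|+|v|)N^2/m$ or $|uv|N^2/m^2$, and both are $O(N^2)$ thanks to $|v|\leqslant|u|=m$. In the degenerate regime $m>2N$ the sum is empty, and the claimed main term is then $O(N^3/m)=O(N^2)$, which is absorbed into the error.
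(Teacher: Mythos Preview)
Your proof is correct and follows essentially the same approach as the paper: parametrise solutions via $(a_1-a_2,b_1-b_2)=(ut,vt)$ using coprimality, count the pairs for each $t$ as $(2N+1-|ut|)(2N+1-|vt|)$, and then sum over $|t|\leqslant\lfloor 2N/m\rfloor$ using the standard power-sum identities. If anything you are slightly more careful than the paper in explicitly verifying that the lower-order terms are $O(N^2)$ uniformly in $u,v$ and in handling the degenerate case $m>2N$.
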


\begin{proof}
    Notice that as $u,v\neq 0$, if \eqref{eqn2.3} holds, then $a_1 = a_2$ if and only if $b_1=b_2$. Thus, there are $O(N^2)$ solutions where $a_1=a_2$ or $b_1=b_2$. It therefore suffices to count solutions which do not satisfy this condition.
    
    Since $u$ and $v$ are coprime, equation \eqref{eqn2.3} holds if and only if there exists an integer $z$ such that
\begin{equation*}
    a_{1} - a_{2} = uz, \qquad \text{and} \qquad b_{1} - b_{2} = vz.
\end{equation*}
    Our requirement that $a_1,a_2,b_1,b_2\in[-N,N]$ therefore imposes the condition $|mz|\leqslant 2N$. Thus, given $z \in \mathbb{Z}$ satisfying  $0<|mz|\leqslant 2N$, there are 
    \[ |[-N, N] \cap [-N+ uz, N + uz]\cap \mathbb{Z}| = 2N-|uz| + 1\]
    choices of $(a,a')\in([-N,N]\cap\Z)^2$ satisfying $a-a' = uz$. Similarly, there are precisely $2N-|vz|+1$ choices of $(b,b')\in ([-N,N]\cap\Z)^2$ satisfying $b-b' = |vz|$. 
    Hence, writing $M=\lfloor 2N/m\rfloor$, the total number of choices for $(a_{1},a_{2},b_{1},b_{2})$ with $a_1\neq a_2$ is
\begin{align} \label{eqn2.4}
    2\sum_{z\in[M]} & (2N - |u|z+1)(2N - |v|z +1)  = 2\sum_{z\in[M]}  (2N - |u|z)(2N - |v|z ) + O(NM) \nonumber \\
    & = 8N^2 M - 2N(|v|+|u|) M(M+1)  + \frac{1}{3}|u||v| M(M+1)(2M+1) + O(NM).
    \end{align}
Noting that $M = 2N/m + O(1)$ and 
\[ \max\{N/m,1\} (|u| + |v|) \leqslant 2N,\]
the right-hand side of \eqref{eqn2.4} simplifies to
\[ \frac{4N^3}{3m^3}\left( 12m^2 - 6m(|u|+|v|) + 4|uv|\right) + O(N^2),\]
which is the claimed estimate.
\end{proof}

Using these results, we now proceed to count `degenerate' solutions to \eqref{eqn2.1} where $a_2b_2a_3b_3 =0$. A notable finding of our analysis is that although almost all pairs $(A,B)\in \Mat_2(\Z,N)$ do not satisfy this condition, such pairs nevertheless contribute a positive proportion of commuting pairs. Indeed, as indicated in the introduction, there are $2(2N)^5 + O(N^4)$ commuting pairs $(A,B)\in {\rm Mat}_2(\mathbb{Z},N)$ with one of $A$ or $B$ equalling a scalar multiple of the identity matrix. Writing 
\[ \Gamma = \{ (A,B) \in {\rm \Mat_2}(\mathbb{Z},N)^2  : AB = BA\} \ \ \text{and} \ \ \Gamma' = \{ (A,B) \in \Gamma : a_2b_2a_3b_3 \neq 0\},\]
we now prove that almost all of the `degenerate' commuting pairs $(A,B)\in \Gamma$ must have either $A$ or $B$ being a multiple of the identity matrix.

\begin{prop}\label{prop2.3}
   We have $|\Gamma \setminus \Gamma'| = 2(2N)^5 + O(N^4\log N)$.
\end{prop}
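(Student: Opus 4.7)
The plan is to decompose $\Gamma \setminus \Gamma'$ according to its degeneracy type. The main contribution comes from pairs where $A$ or $B$ is a scalar multiple of the identity, and I will bound all other contributions by $O(N^4 \log N)$ via Lemma~\ref{lem2.2}.

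First, the \emph{scalar} contribution: inclusion--exclusion applied to $\{(A,B): A \text{ scalar}\}$ and $\{(A,B): B \text{ scalar}\}$, whose sizes are $(2N+1)^5$ each with intersection of size $(2N+1)^2$, gives $2(2N+1)^5 - (2N+1)^2 = 2(2N)^5 + O(N^4)$. All such pairs lie in $\Gamma \setminus \Gamma'$ since any scalar matrix has $a_2 = a_3 = 0$.

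Next, I restrict to pairs where neither $A$ nor $B$ is scalar. If $A$ is diagonal but not scalar, then $a_2 = a_3 = 0$ and $a_1 \neq a_4$, so equations $(2)$ and $(3)$ of \eqref{eqn2.1} force $b_2 = b_3 = 0$; hence $B$ is also diagonal. Pairs with both matrices diagonal but neither scalar trivially number $((2N+1)^2 - (2N+1))^2 = O(N^4)$, which is absorbed into the error.

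For the remaining pairs, neither matrix is scalar and neither is diagonal. The commutation equations and the degeneracy condition are invariant under $(A, B) \mapsto (A^T, B^T)$, which swaps $a_2 \leftrightarrow a_3$ and $b_2 \leftrightarrow b_3$, so it suffices to handle the case $a_2 = 0$ and then double. Non-diagonality of $A$ forces $a_3 \neq 0$; equation $a_2 b_3 = a_3 b_2$ of \eqref{eqn2.1} then yields $b_2 = 0$; and non-diagonality of $B$ gives $b_3 \neq 0$. The only surviving constraint is equation $(3)$,
\[ a_3(b_4 - b_1) = b_3(a_4 - a_1). \]
Writing $g = \gcd(a_3, b_3)$, $a_3 = gu$, $b_3 = gv$ with $\gcd(u, v) = 1$ and $g, u, v \neq 0$, this becomes $u(b_4 - b_1) = v(a_4 - a_1)$. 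For each primitive $(u, v)$ with $uv \neq 0$ and $m := \max(|u|, |v|)$, Lemma~\ref{lem2.2} bounds the number of $(a_1, a_4, b_1, b_4) \in ([-N, N] \cap \Z)^4$ satisfying this equation by $O(N^3/m)$ (since $12m^2 - 6m(|u| + |v|) + 4|uv| = O(m^2)$), while the number of admissible $g$ with $|gu|, |gv| \leq N$ is $O(N/m)$. Since the number of primitive pairs $(u, v)$ with $\max(|u|, |v|) = m$ is $O(m)$, summing yields
\[ \sum_{\substack{(u, v)\text{ primitive} \\ uv \neq 0,\ 1 \leq m \leq N}} O\!\left(\frac{N^4}{m^2}\right) \ll N^4 \sum_{m=1}^{N} \frac{1}{m} \ll N^4 \log N. \]

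The principal obstacle is this final harmonic sum, whose logarithmic growth is exactly what produces the $\log N$ factor in the error bound; this loss appears to be unavoidable in an elementary treatment. Combining the three contributions yields $|\Gamma \setminus \Gamma'| = 2(2N)^5 + O(N^4 \log N)$, as claimed.
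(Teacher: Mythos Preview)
Your proof is correct and follows essentially the same approach as the paper: both isolate the scalar-identity pairs as the main term and then use Lemma~\ref{lem2.2} to bound the residual case $a_2=b_2=0\neq a_3b_3$ (together with its transpose) by $O(N^4\log N)$ via the same harmonic sum $\sum_{m\leq N}1/m$. The only difference is organisational---the paper partitions $\Gamma\setminus\Gamma'$ according to how many of $a_2,a_3,b_2,b_3$ vanish, whereas you partition by whether $A$ or $B$ is scalar or diagonal---but the substantive computation is identical.
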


\begin{proof}
    Our goal is to count pairs $(A,B)\in \Gamma$ with $a_{2}a_{3}b_{2}b_{3}= 0$. Notice that the first equation of \eqref{eqn2.1} ensures that at least two out of $a_2,b_2,a_3,b_3$ vanish. Suppose that at least three of these variables vanish. In view of \eqref{eqn2.1}, we deduce that either all four vanish or $0\in\{a_4-a_1, b_4-b_1\}$. In any case, we have at most four degrees of freedom in choosing the entries of $A$ and $B$, leading to at most $O(N^4)$ solutions.

    It only remains to consider $(A,B)\in\Gamma\setminus\Gamma'$ such that exactly two out of $a_2,b_2,a_3,b_3$ vanish. More precisely, the first equation of \eqref{eqn2.1} shows that exactly one element from each of the sets $\{a_2,b_3\}$ and $\{a_3,b_2\}$ must vanish. 
    If $a_2=a_3=0\neq b_2b_3$, then the second and third equations of \eqref{eqn2.1} guarantee that $a_4=a_1$, whence $A$ must be a multiple of the identity matrix. By symmetry, the condition $b_2=b_3\neq a_2a_3$ similarly implies that $B$ is a multiple of the identity matrix. Thus, these two cases correspond to pairs $(A,B)\in\Gamma\setminus\Gamma'$ where one of $A$ or $B$ is a multiple of the identity matrix and the other has non-vanishing off-diagonal entries. By inspection, there are $2(2N)^5 + O(N^4)$ such pairs.

    Finally, we have to bound the number of pairs $(A,B)\in\Gamma\setminus\Gamma'$ with $a_2=b_2=0\neq a_3b_3$ or $a_3=b_3=0\neq a_2b_2$. We focus on the latter case; the former can be handled by a symmetrical argument. 
    Our goal is therefore to bound the number of solutions to the second equation of \eqref{eqn2.1}.
 Note that fixing a choice of $(a_2,b_2)$ in our solution to \eqref{eqn2.1} with $a_2b_2 \neq 0$ is equivalent to fixing a choice of $(d,u,v) \in [N] \times\mathbb{Z}\times \mathbb{Z}$ satisfying $0<|du|,|dv|\leqslant N$; indeed, set
 $d=\gcd(a_2,b_2)$ and $(a_2,b_2)=(du,dv)$. 
    For this fixed choice of $(d,u,v)$, we infer from Lemma \ref{lem2.2} that the number of choices for $(a_1,a_4,b_1,b_4)$ is at most
    \begin{equation*}
      \ll  \frac{N^3}{\max\{|u|,|v|\}}.
    \end{equation*}
    Hence, it only remains to sum the above expression over all choices of $(d,u,v)$. By incurring an additional constant factor, we can restrict $u$ and $v$ to be positive integers with $v\leqslant u$. Thus, the number of solutions is at most
    \begin{align*}
      \ll \sum_{1 \leq d \leq N}\sum_{\substack{1 \leq v \leq u \leq N/d,\\(u,v) = 1}} \frac{N^3}{u} 
      & \leqslant \sum_{1 \leq d \leq N} \sum_{1 \leq u \leq N/d} N^3 = N^3 \sum_{1 \leq u \leq N} \sum_{1 \leq d \leq N/u} 1 \\
      & \leq N^4      \sum_{1 \leq u \leq N} \frac{1}{u} \ll N^4 \log N.
    \end{align*}
    Combining all of our estimates completes the proof.
\end{proof}

We now present our proof of Theorem \ref{thm1.2}.

\begin{proof}[Proof of Theorem \ref{thm1.2}] 
Combining Proposition \ref{prop2.3} along with the fact that
\[ \mathfrak{C}_2(N) = |\Gamma| = |\Gamma'| + |\Gamma \setminus \Gamma'|, \]
 we see that it suffices to prove that
    \begin{equation}\label{eqn2.5}
        |\Gamma'| = \left(\frac{10\zeta(2)}{3\zeta(3)} - 2\right)(2N)^5 + O(N^4\log N).
    \end{equation}
    We are therefore counting solutions to \eqref{eqn2.1} with $a_2a_3b_2b_3\neq 0$. Note that any $a_1, \dots, b_8 \in \mathbb{Z}$ with $a_2a_3b_2b_3\neq 0$ which satisfy the first two equations in \eqref{eqn2.1} must also satisfy the third equation in \eqref{eqn2.1}. As in the proof of Proposition \ref{prop2.3}, we perform a change of variables so that fixing a choice of $(a_2,b_2)$ with $a_2b_2 \neq 0$ is equivalent to fixing a choice of $(d,u,v)$ with $d=\gcd(a_2,b_2)$ and $(a_2,b_2)=(du,dv)$ for coprime $u,v\in\Z$ with $1 \leq |ud|,|vd| \leq N$. 
    Summarising, we have
    \begin{equation*} 
         |\Gamma'| = \sum_{1 \leq d \leq N }\sum_{\substack{1 \leq |u|,|v| \leq N/d, \\ (u,v) = 1}} \bigg( \sum_{0 < |a_3|,|b_3| \leq N} \mathds{1}_{b_3u = a_3v} \bigg)\bigg(  \sum_{\substack{0 \leq |a_1|, |a_4| \leq N, \\
         0 \leq |b_1|,|b_4| \leq N}} \mathds{1}_{u(b_4 - b_1) = v(a_4 - a_1)} \bigg). 
    \end{equation*}
    Writing $m = \max\{ |u|, |v|\}$, we may apply  Lemma \ref{lem2.1} and Lemma \ref{lem2.2} to deduce that
    \begin{equation}\label{eqn2.6}
         |\Gamma'| = \sum_{1 \leq d \leq N }\sum_{\substack{1 \leq |u|,|v| \leq N/d, \\ (u,v) = 1}}  \left( \frac{8N^4}{3m^4}\left( 12m^2 - 6m(|u|+|v|) + 4|uv|\right) + O\left(\frac{N^3}{m}\right) \right).
    \end{equation}
    
   As demonstrated in the proof of Proposition \ref{prop2.3}, we have the estimate 
   \[ \sum_{1 \leq d \leq N }\sum_{\substack{1 \leq |u|,|v| \leq N/d, \\ (u,v) = 1}} \frac{N^3}{m} \ll N^4 \log N.  \]
    We therefore proceed to analyse the contribution from the main term. The stipulation that $u$ and $v$ are coprime implies that $|u|=|v|$ holds if and only if $u,v\in\{-1,1\}$. The contribution of the main terms arising from this case to the right hand side of \eqref{eqn2.6} is 
    \begin{equation}\label{eqn2.7}
        4N^4\sum_{1 \leq d\leqslant N}\frac{8}{3}(12 - 6(1+1) + 4) = \frac{4}{3}(2N)^5.
    \end{equation}
   Next, note that the main term in the summand in \eqref{eqn2.6} is symmetric in $u$ and $v$ and does not depend on their signs. Thus, upon incurring a factor of $8$, we may assume that $1 \leq v < u$. Consequently, the sum of the remaining main terms in \eqref{eqn2.6} can be written as
  \begin{equation} \label{eqn2.8} 
  8 \sum_{1 \leq d\leqslant N}  \sum_{\substack{1 \leq v < u \leq N/d\\ (u,v)=1}} \frac{8N^4}{3u^4} (6u^2  - 2uv) =\frac{4}{3}(2N)^4  \sum_{1 \leq d\leqslant N}  \sum_{\substack{1 \leq v < u \leq N/d\\ (u,v)=1}} \frac{6u - 2v}{u^3} . 
  \end{equation} 
   Writing $\phi(u) = |\{ 1\leq v \leq u : \gcd(v, u) = 1\}|$ to be the Euler totient function, we see that whenever $u \geq 2$, one has
   \[    \sum_{\substack{1 \leq v < u\\ (u,v)=1}}2v = \sum_{\substack{1\leqslant v<u\\ (u,v)=1}} v + \sum_{\substack{1\leqslant v<u\\ (u,v)=1}}(u - v) = u\phi(u) . \]
        Here we have used the fact that $\gcd(u,v) = \gcd(u, u-v)$. From this identity we infer that
     \begin{align}  \label{eqn2.9}
        \frac{4}{3}(2N)^4  \sum_{1 \leq d\leqslant N}  \sum_{\substack{1 \leq v < u \leq N/d\\ (u,v)=1}} \frac{6u - 2v}{u^3}  
        & =\frac{4}{3}(2N)^4  \sum_{1 \leq d\leqslant N}\sum_{1 < u \leq N/d}  \frac{5\phi(u)}{u^2} \nonumber \\
        & =\frac{20}{3}(2N)^4\sum_{1<u\leqslant N}\frac{\phi(u)}{u^2} \sum_{1 \leq d\leqslant N/u} 1 \nonumber \\
        &= \frac{20}{3}(2N)^4\sum_{1<u\leqslant N}\left(\frac{N\phi(u)}{u^3} + O\left(\frac{\phi(u)}{u^2}\right)\right).
    \end{align}
    Putting together \eqref{eqn2.6}, \eqref{eqn2.7}, \eqref{eqn2.8}, \eqref{eqn2.9} and invoking the standard estimates
    \[ \sum_{u=2}^{N} \frac{\phi(u)}{u^3} = \frac{\zeta(2)}{\zeta(3)} - 1 + O\left(\frac{1}{N}\right) \quad \text{and} \quad  \sum_{u=2}^{N}\frac{\phi(u)}{u^2} = \frac{\log N}{\zeta(2)} +O(1)
    \]
    (see, for example, \cite[Page 71]{Ap1976}) furnishes the claimed result \eqref{eqn2.5}.
\end{proof}

\section{Restricted divisor correlations}\label{sec3}

In this section, we present a proof of Lemma \ref{lem1.5}. The key input is the following useful lemma.

    \begin{lemma} \label{lem3.1}
         For all real numbers $X \geq 1$ and positive integers $k$, we have
    \begin{equation*}
        \sum_{n\leqslant X}\left(\sum_{d\mid n}\frac{1}{d}\right)^k \ll_k X.
    \end{equation*}
    \end{lemma}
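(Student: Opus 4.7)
The plan is to expand the inner sum, swap orders of summation, and reduce the estimate to showing convergence of an absolutely convergent Euler product. First I would write
\[ \sum_{n \leq X} \left(\sum_{d \mid n} \frac{1}{d}\right)^k = \sum_{d_1, \ldots, d_k \geq 1} \frac{1}{d_1 \cdots d_k} \#\{n \leq X : d_i \mid n \text{ for all } i\}, \]
and bound the inner cardinality by $X / \lcm(d_1, \ldots, d_k)$. Pulling out the factor of $X$, the claim reduces to showing
\[ T_k := \sum_{d_1, \ldots, d_k \geq 1} \frac{1}{d_1 \cdots d_k \, \lcm(d_1, \ldots, d_k)} \ll_k 1. \]

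To prove this I would exploit multiplicativity. The summand depends on $(d_1, \ldots, d_k)$ only through the $p$-adic valuations $a_i = v_p(d_i)$ at each prime, and splits as a product over primes. Consequently $T_k$ factors as an Euler product $\prod_p E_k(p)$, where
\[ E_k(p) = \sum_{a_1, \ldots, a_k \geq 0} p^{-(a_1 + \cdots + a_k + \max_i a_i)}. \]

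The key observation is that each local factor satisfies $E_k(p) = 1 + O_k(p^{-2})$. Indeed, the zero tuple contributes $1$, and for any nonzero tuple the exponent $a_1 + \cdots + a_k + \max_i a_i$ is at least $2m$ where $m = \max_i a_i \geq 1$. A crude count shows there are at most $k(m+1)^{k-1}$ tuples with $\max_i a_i = m$, so the nonzero contribution is bounded by $\sum_{m \geq 1} k(m+1)^{k-1} p^{-2m} \ll_k p^{-2}$. Convergence of $\prod_p \left(1 + O_k(p^{-2})\right)$ is then immediate and yields $T_k \ll_k 1$, establishing the lemma.

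The only obstacle is the routine book-keeping in the local factor estimate, but this presents no real difficulty. An alternative route would be induction on $k$ via Cauchy--Schwarz or H\"older, reducing matters to the case $k=1$ (which follows from $\sum_{n \leq X} \sigma(n)/n \ll X$); however, the Euler product approach seems cleaner and delivers the bound in one shot.
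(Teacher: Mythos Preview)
Your proof is correct. Both your approach and the paper's begin identically---expand the $k$th power, swap the order of summation, and reduce to bounding
\[
T_k = \sum_{d_1,\ldots,d_k} \frac{1}{d_1\cdots d_k\,\lcm(d_1,\ldots,d_k)}
\]
---but diverge at this point. The paper argues elementarily: after restricting (at the cost of a factor $k$) to tuples with $d_1=\max_i d_i$, it uses the inequality $\lcm(d_1,\ldots,d_k)\geq d_1$ to bound the sum by $\sum_{d_1}\frac{X}{d_1^2}(\log d_1 + O(1))^{k-1}\ll_k X$, with no appeal to multiplicativity. You instead factor $T_k$ as an Euler product $\prod_p E_k(p)$ and show $E_k(p)=1+O_k(p^{-2})$ via the local inequality $a_1+\cdots+a_k+\max_i a_i\geq 2\max_i a_i$. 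Your route has the minor advantage that bounding $\lfloor X/\lcm\rfloor\leq X/\lcm$ directly sidesteps the $O_k(\log^k X)$ error term the paper incurs from writing $\lfloor X/\lcm\rfloor = X/\lcm + O(1)$ and summing the $O(1)$ over $d_i\leq X$; on the other hand, the paper's argument is more elementary and self-contained, avoiding Euler products entirely. Both are short and either would serve.
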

    \begin{proof}
  Expanding and changing the order of summation of the left hand side above reveals that
    \begin{align*}
        \sum_{n\leqslant X}\left(\sum_{d\mid n}\frac{1}{d}\right)^k &= \sum_{d_1,\ldots,d_k\leqslant X} \frac{|\{n\leqslant X: \lcm(d_1,\ldots,d_k)\mid n\}|}{d_1\cdots d_k} \\
        &= \sum_{d_1,\ldots,d_k\leqslant X} \frac{X}{d_1\cdots d_k\lcm(d_1,\ldots,d_k)} + O_k(\log^k X).
    \end{align*}
    By incurring a factor of $k$ in our upper bound, we can restrict the final sum to tuples $(d_1,\ldots,d_k)$ with $d_1=\max\{d_1,\ldots,d_k\}$. Since $d_1\leqslant\lcm(d_1,\ldots,d_k)$, we therefore have
    \begin{align*}
        \sum_{n\leqslant X}\left(\sum_{d\mid n}\frac{1}{d}\right)^k 
        &\ll_k\sum_{d_1\leqslant X}\frac{X}{d_1^2}\left(\sum_{d_2,\ldots,d_k\leqslant d_1}\frac{1}{d_2\cdots d_k}\right) \\
        & = \sum_{d_1\leqslant X}\frac{X}{d_1^2}((\log d_1)^{k-1} + O_k((\log d_1)^{k-2}))\ll_k X,
    \end{align*}
    as required.
\end{proof}

We now proceed to the proof of Lemma \ref{lem1.5}.

\begin{proof}[Proof of Lemma \ref{lem1.5}]
     We begin by proving the second inequality in \eqref{eqn1.5}. Observe that there are $O(N^2)$ solutions $(x_1,x_2,x_3,x_4)\in([-N,N]\cap\Z)^4$ to the equation $x_1x_2=x_3x_4$ which satisfy $x_1x_2x_3x_4=0$ or $|x_1|=|x_3|$. Hence, by writing $x_1=du$ and $x_3=dv$ with $d=\gcd(x_1,x_3)$, 
    Lemma \ref{lem2.1} implies that
    \begin{align*}
        r(0) &= O(N^2) + 2\sum_{d\leqslant N}\sum_{ \substack{0<|u|,|v|\leqslant N/d\\ (u,v) = 1}}\left\lfloor\frac{N}{\max\{|u|,|v|\}}\right\rfloor = O(N^2) + 16\sum_{d\leqslant N}\sum_{u\leqslant N/d}\sum_{\substack{v\leqslant u \\ (u,v)=1} }\frac{N}{u} \\
        &= O(N^2) + 16N^2\sum_{u\leqslant N}\frac{\phi(u)}{u^2} = \frac{16}{\zeta(2)}N^2\log N + O(N^2),
    \end{align*}
     as required.

We now prove the first inequality in \eqref{eqn1.5}. Let $h$ be an integer satisfying $0 \neq |h| \leq 2N^2$ and let $-N \leqslant a_1, \dots, a_4 \leqslant N$ satisfy $a_1 a_2 - a_3a_4 = h$. Since $h \neq 0$, the standard divisor bound shows that the contribution from solutions satisfying $a_1\dots a_4 = 0$ is $O_\eps(N^{1+\eps})$ for any $\eps>0$. Hence, we may assume that $a_1, \dots, a_4 \in \{-N, \dots, N\} \setminus \{0\}$. For any such solution, the greatest common divisor $(a_1, a_3)$ of $a_1$ and $a_3$ divides $h$. We may assume that $a_1, a_3 > 0$ since the analysis in all the other cases follows in a similar fashion. Assuming $a_1, a_3>0$, we note, as before, that the divisor function estimate implies that the number of solutions where $a_1 = a_3$ is at most $O_\eps(N^{1+\eps})$. Thus, we now analyse the contribution of the solutions which have $a_1 > a_3$. In this case, if we fix $a_1, a_3$, then the number of valid possibilities for $a_2, a_4$ is at most $\frac{(2N+1) (a_1, a_3)}{a_1} + O(1)$; indeed 
    \[ a_4 \equiv \bigg( \frac{a_3}{(a_1,a_3)} \bigg)^{-1} \frac{h}{(a_1, a_3)} \;\bigg({\rm mod}  \ 
 \frac{a_1}{(a_1, a_3)} \bigg) \quad \text{and} \quad -N \leq a_4 \leq N . \]
 Similarly, if we fix $a_1, a_3$ such that $a_1 < a_3$, then the number of valid possibilities for $a_2, a_4$ is at most $\frac{(2N+1)(a_1,a_3)}{a_3} + O(1)$. 

 Summarising the above paragraph, we have
 \[ r(h) \ll N^{1+1/2} + \sum_{\substack{0 < a_3< a_1 \leq N}} \mathds{1}_{(a_1, a_3)|h} \bigg( \frac{N (a_1, a_3)}{a_1} + O(1)\bigg).  \]
 Setting $d= (a_1, a_3)$ and noting that for fixed $a_1$ and $d$ there are at most $a_1/d$ valid choices for $a_3$, we see that
 \[ r(h) \ll N^2  + N \sum_{1 < a_1 \leq N } \frac{1}{a_1} \sum_{d | a_1} \mathds{1}_{d |h}\cdot d \left(\frac{a_1}{d}\right)   .\]
 Simplifying and changing the order of summation, we find that 
 \begin{align*} 
 r(h)  
 & \ll N^2  + N  \sum_{\substack{d|h,\\  1 \leq d < N}} \sum_{a_1 \leqslant N} \mathds{1}_{d|a_1}  \ll N \sum_{\substack{d|h,\\  1 \leq d \leq N}} ( N/d + 1) 
 \ll N^2 \sum_{\substack{d|h,\\  1 \leq d \leq N}} 1/d ,
 \end{align*} 
which is the desired estimate.

We conclude by deducing \eqref{eqn1.6} from \eqref{eqn1.5} and Lemma \ref{lem3.1}. Indeed, \eqref{eqn1.6} and Lemma \ref{lem3.1} imply that
\begin{align*}
    I_k(N)  & = \sum_{|h| \leq 2N^2} r(h)^k = r(0)^k + \sum_{0 < |h| \leq 2N^2} r(h)^k  \\
    & \ll_k N^{2k} (\log N)^k + \sum_{0 < |h| \leq 2N^2}  N^{2k} ( \sum_{d|h}1/d )^k \\
    & \ll_k N^{2k}(\log N)^{k} + N^{2k+2} \ll_k N^{2k+2}.
\end{align*}
This concludes the proof of Lemma \ref{lem1.5}.
\end{proof}

\section{Commuting pairs of \texorpdfstring{$3\times 3$}{3 x 3} matrices}\label{sec4}

In this section, we will present the proof of Theorem \ref{thm1.1}. Since the lower bound in Theorem \ref{thm1.1} follows from \eqref{eqn1.1}, it suffices to prove the upper bound $\mathfrak{C}_3(N) \ll N^{10}$. Let
\begin{equation} \label{eqn4.1}
A = \begin{pmatrix}
    a_1 & a_2 & a_3 \\
    a_4 & a_5 & a_6 \\
    a_7 & a_8 & a_9 
\end{pmatrix}
\ \ \text{and} \ \ 
B = \begin{pmatrix}
    b_1 & b_2 & b_3 \\
    b_4 & b_5 & b_6 \\
    b_7 & b_8 & b_9 
\end{pmatrix}
\end{equation}
be two matrices in ${\rm Mat}_2(\mathbb{Z}, N)$. We refer to $a_1,a_5,a_9$ and $b_1,b_5,b_9$ as the \emph{diagonal} entries of $A$ and $B$ respectively. All other entries of $A$ and $B$ are called \emph{off-diagonal} entries. For each $1 \leq i, j \leq 9$, define
\[ \cD_{i,j} = a_i b_j - a_j b_i = \det
\begin{pmatrix}
    a_i & a_j\\
    b_i & b_j
\end{pmatrix}
=\det
\begin{pmatrix}
    -b_j & a_j\\
    b_i & -a_i
\end{pmatrix}. \]
Note that $\cD_{j,i} = - \cD_{i,j}$ and $\cD_{i,i} = 0$ for every $1 \leq i,j \leq 9$. With this in hand, we see that
\[ AB - BA = \begin{pmatrix}
    \cD_{2,4} + \cD_{3,7} & \cD_{1,2} + \cD_{2,5} + \cD_{3,8} & \cD_{1,3} + \cD_{2,6} + \cD_{3,9} \\
    \cD_{4,1} + \cD_{5,4} + \cD_{6,7} & \cD_{4,2} + \cD_{6,8} & \cD_{4,3} + \cD_{5,6} + \cD_{6,9} \\
    \cD_{7,1} + \cD_{8,4} + \cD_{9,7} & \cD_{7,2} + \cD_{8,5} + \cD_{9,8} & \cD_{7,3}+\cD_{8,6} 
\end{pmatrix}.\]
Thus, the condition $AB=BA$ implies that all the entries in the above matrix are zero. In particular, $\mathfrak{C}_3(N)$ counts the number of $a_1, \dots, a_9, b_1, \dots, b_9 \in [-N,N]\cap \mathbb{Z}$ such that 
\begin{equation} \label{eqn4.2}
    a_2 b_4 - b_2 a_4 = a_7 b_3 - a_3 b_7 = a_6 b_8 - a_8 b_6
\end{equation}
and
\begin{align*} 
        -b_2(a_5- a_1) + a_2(b_5 - b_1)  + \cD_{3,8} & = 0 = -a_4 (b_5-b_1) + b_4 (a_5-a_1)  + \cD_{6,7} \\
    -b_3(a_9-a_1) + a_3 (b_9-b_1) + \cD_{2,6} &  = 0 = -a_7 (b_9-b_1) + b_7(a_9-a_1) + \cD_{8,4}   \\
   - b_8(a_9 - a_5) + a_8(b_9-b_5)  + \cD_{2,7}  & = 0 =   
   - a_6(b_9 - b_5) + b_6(a_9-a_5)  + \cD_{3,4} .
\end{align*}  
The latter six equations may be rewritten as
\begin{equation}  \label{eqn4.3}
M X = Y,
\end{equation}
where
\[ M =  \begin{pmatrix}
    -b_2 & a_2 & 0 & 0  \\
    b_4 & -a_4 & 0 & 0  \\
    0 & 0 & -b_3 & a_3  \\
    0 & 0 & b_7 & -a_7 \\
    b_8 & -a_8 & -b_8 & a_8 \\
    -b_6 & a_6 & b_6 & -a_6 \\ 
\end{pmatrix}, \quad X = 
\begin{bmatrix}
a_5-a_1 \\ 
b_5-b_1 \\
a_9-a_1 \\
b_9 - b_1 \\ 
\end{bmatrix},
\quad\text{and}\quad
Y = \begin{bmatrix}
    \cD_{8,3} \\
    \cD_{7,6} \\
    \cD_{6,2} \\
    \cD_{4,8} \\
    \cD_{7,2} \\
    \cD_{4,3}
\end{bmatrix}.
\]

We divide our analysis into multiple cases, depending on the rank of $M$. For $0\leq i \leq 4$, let $\cS_i$ be the set of all commuting pairs
$(A,B)\in\Mat_2(\mathbb{Z}, N)^2$ with ${\rm rank}(M) = i$. Thus,
\begin{equation} \label{eqn4.4}
    \mathfrak{C}_3(N) = \sum_{i=0}^4 |\cS_i|.
\end{equation} 
We therefore have to show that $|\cS_i| \ll N^{10}$ for every $0 \leq i \leq 4$. 

\begin{lemma} \label{lem4.1}
    We have $|\cS_0| + |\cS_1| + |\cS_3| \ll N^9 ( \log N)^3$.
\end{lemma}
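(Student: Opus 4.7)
The plan is to split the count $|\cS_0| + |\cS_1| + |\cS_3|$ into three parts and show that $\cS_3$ furnishes the dominant contribution of order $N^9(\log N)^3$. For $\cS_0$, the condition $M \equiv 0$ forces all twelve off-diagonal entries of $A$ and $B$ to vanish. Both matrices are then diagonal and commute automatically, so only the six diagonal entries remain free and $|\cS_0| \ll N^6$.

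For $\cS_1$, I would write $M = uv^T$ for some $u \in \R^6$ and $v \in \R^4$ and exploit the zero pattern of $M$: the nonzero entries in rows $1,2$ of $M$ are supported in columns $1,2$, while those in rows $3,4$ are supported in columns $3,4$. Hence any nonzero entry among $u_1, u_2$ forces $v_3 = v_4 = 0$, and dually for $u_3, u_4$ and $v_1, v_2$. A short case analysis then shows that in each subcase at least eight of the twelve off-diagonal entries of $(A,B)$ must vanish, reducing the pair to block-diagonal form (possibly after a simultaneous permutation of rows and columns) with one $1\times 1$ and one $2\times 2$ block. The commutation condition then involves only the $2\times 2$ block, giving $|\cS_1| \ll \mathfrak{C}_2(N)\cdot (2N+1)^2 \ll N^7$ via \eqref{eqn1.2}.

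The main case is $\cS_3$. Since $\mathrm{rank}(M) \leq 3$, every $4\times 4$ minor of $M$ vanishes; in particular, the block-diagonal minor on rows $1,2,3,4$ factors as $\cD_{2,4}\cdot \cD_{3,7}$, so at least one of these vanishes. Combining with \eqref{eqn4.2} forces
\[
\cD_{2,4} = \cD_{3,7} = \cD_{6,8} = 0,
\]
and hence each of the three pairs of vectors $((a_2,b_2), (a_4,b_4))$, $((a_3,b_3), (a_7,b_7))$, $((a_6,b_6), (a_8,b_8))$ consists of parallel integer vectors. Applying the estimate $r(0) \ll N^2 \log N$ from Lemma \ref{lem1.5} independently to each pair bounds the number of admissible off-diagonal configurations by $r(0)^3 \ll N^6(\log N)^3$.

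For the diagonal entries, I would use that the equation $MX = Y$, with $X = (a_5-a_1, b_5-b_1, a_9-a_1, b_9-b_1)^T$, has for each off-diagonal configuration with $\mathrm{rank}(M) = 3$ either no integer solution or a coset $X_0 + \Z v$, where $v \in \Z^4$ is a primitive generator of $\ker(M) \cap \Z^4$. Since $|v|_\infty \geq 1$, at most $\ll N$ such solutions lie in $[-2N,2N]^4$; combined with $(2N+1)^2$ free choices of $(a_1, b_1)$ (which together with $X$ determine the remaining diagonal entries), this gives $\ll N^3$ diagonal choices per off-diagonal configuration. Multiplying yields $|\cS_3| \ll N^6(\log N)^3 \cdot N^3 = N^9(\log N)^3$, and summing the three cases gives the lemma. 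The main obstacle is this treatment of $\cS_3$: one must extract the correct logarithmic count from Lemma \ref{lem1.5} on the off-diagonal side \emph{and} use the rank-$3$ condition to save a factor of $N$ via the kernel analysis on the diagonal side, and together they just meet the target.
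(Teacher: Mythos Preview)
Your proposal is correct and follows essentially the same route as the paper. The treatments of $\cS_0$ and $\cS_3$ are identical to the paper's: for $\cS_3$ both arguments use the vanishing of the $4\times 4$ minor on rows $1$--$4$ together with \eqref{eqn4.2} to force $\cD_{2,4}=\cD_{3,7}=\cD_{6,8}=0$, then bound the off-diagonal configurations by $r(0)^3\ll N^6(\log N)^3$ via Lemma~\ref{lem1.5}, and finally use ${\rm rank}(M)=3$ to restrict $X$ to an affine line, yielding $O(N^3)$ diagonal choices.

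The only genuine difference is in $\cS_1$. The paper argues directly that if some row of $M$ is nonzero then all other rows, being scalar multiples, must have matching support, which forces eight of the twelve off-diagonal entries to vanish; it then counts $O(N^4)$ off-diagonal choices and $O(N^5)$ diagonal choices (from ${\rm rank}(M)=1$), giving $|\cS_1|\ll N^9$. You reach the same ``eight entries vanish'' conclusion via the $uv^T$ factorisation, but then observe that the resulting pair is (up to a simultaneous permutation) block-diagonal with a $2\times 2$ and a $1\times 1$ block, so the commutation reduces to $\mathfrak{C}_2(N)$, yielding the sharper bound $|\cS_1|\ll N^{7+\eps}$ (note that \eqref{eqn1.2} gives $N^{5+\eps}$, not $N^5$; cite Theorem~\ref{thm1.2} if you want $N^7$). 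This is a nice refinement, though immaterial for the lemma since $\cS_3$ dominates.
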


\begin{proof}
Observe that $\rm{rank}(M)=0$ if and only if all the off-diagonal entries of the corresponding matrices $A$ and $B$ vanish. Hence, $\cS_0$ consists of all pairs of diagonal matrices from $\Mat_3(\Z,N)$, and so $|\cS_0|=(2N+1)^6$.

Suppose $(A,B)\in\cS_3$. As ${\rm rank}(M) = 3$, the determinant of any $4 \times 4$ sub-matrix of $M$ is $0$. In particular, writing $M'$ to be the $4 \times 4$ sub-matrix consisting of the first four rows of $M$, we have $\det(M') = \cD_{2,4} \cD_{3,7} = 0$. Noting \eqref{eqn4.2}, this gives us 
\begin{equation} \label{eqn4.5}
    \cD_{2,4} = \cD_{3,7} = \cD_{6,8} = 0
\end{equation} 
Applying Lemma \ref{lem1.5}, the number of choices for the off-diagonal entries of $(A,B)$ which satisfy \eqref{eqn4.5} is at most $r(0)^3 = O(N^6 (\log N)^3)$. With these terms fixed, we use the fact that ${\rm rank}(M) =3$ to deduce that there are at most $O(N)$ choices for the vector $X$ in the equation \eqref{eqn4.3}. For a fixed choice of $X$, there are $O(N^2)$ choices for the diagonal entries of $A$ and $B$. We therefore deduce that
\begin{equation*}
|\cS_3| \ll N^6 (\log N)^3 \cdot N^2 \cdot N = N^9 ( \log N)^3. 
\end{equation*}

Finally, suppose $(A,B)\in\cS_1$. We will show that this implies that at least $8$ terms out of $12$ of the off-diagonal entries of $A$ and $B$ are zero. In order to justify our claim, suppose that not all the elements in the first row of $M$ are zero, that is, one of $a_2, b_2$ is non-zero. Since ${\rm rank}(M) =1$, any two rows of $M$ are linearly dependent, and so all other rows are multiples of the first row. Applying this observation to the third, fourth, fifth and sixth rows of $M$, we see that $a_3, b_3, a_7, b_7, a_8,b_8,a_6,b_6$ are all zero. The case when any other row of the matrix $M$ has a non-zero entry can be analysed similarly. 

With this claim in hand, we can fix the off-diagonal entries of $A$ and $B$ in at most $O(N^4)$ many ways. Since ${\rm rank}(M) = 1$, for this fixed choices, there are at most $O(N^3)$ many choices for the vector $X$, whence, as in the previous cases, there are $O(N^5)$ many choices for the diagonal entries of $A$ and $B$. From this we conclude that $|\cS_1| \ll N^9$, as desired.
\end{proof}

We now turn to estimating $|\cS_4|$, which potentially contributes a positive proportion of commuting pairs.

\begin{lemma} \label{lem4.2}
    We have $|\cS_4| \ll N^{10}$.
\end{lemma}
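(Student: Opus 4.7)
The plan is to decouple the twelve off-diagonal entries of $A$ and $B$ from the six diagonal entries, exploiting the rank condition. The key observation is that the three quantities $\cD_{2,4}$, $\cD_{7,3}$, $\cD_{6,8}$ appearing in \eqref{eqn4.2} involve pairwise disjoint sets of four variables, namely $\{a_2,a_4,b_2,b_4\}$, $\{a_3,a_7,b_3,b_7\}$ and $\{a_6,a_8,b_6,b_8\}$, and together they exhaust all twelve off-diagonal entries. Moreover, each $\cD_{i,j}$ is of the form $x_1x_2-x_3x_4$ with $x_i\in[-N,N]\cap\Z$, so for each $h\in\Z$, the number of off-diagonal configurations with $\cD_{2,4}=\cD_{7,3}=\cD_{6,8}=h$ equals $r(h)^3$. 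Summing over $h$ gives
\begin{equation*}
\#\{\text{off-diagonals satisfying \eqref{eqn4.2}}\} = \sum_{h\in\Z}r(h)^3 = I_3(N) \ll N^8,
\end{equation*}
by \eqref{eqn1.6} of Lemma \ref{lem1.5}.

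Next, I would fix any such off-diagonal configuration for which $\mathrm{rank}(M)=4$. The linear system $MX=Y$ then admits at most one rational, and hence at most one integral, solution $X=(a_5-a_1,b_5-b_1,a_9-a_1,b_9-b_1)\in\Z^4$. Given such an $X$, the coordinates $a_5-a_1$ and $a_9-a_1$ determine $a_5$ and $a_9$ as functions of $a_1$, so $(a_1,a_5,a_9)\in([-N,N]\cap\Z)^3$ admits at most $2N+1=O(N)$ valid choices; an identical argument gives $O(N)$ choices for $(b_1,b_5,b_9)$, for a total of $O(N^2)$ diagonal configurations. Combining these bounds yields
\begin{equation*}
|\cS_4| \leq I_3(N)\cdot O(N^2) \ll N^{10},
\end{equation*}
which is the desired estimate.

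The entire argument rests on the moment estimate $I_3(N)\ll N^8$ from Lemma \ref{lem1.5}, which is the only non-trivial analytic input; the remainder is a routine counting argument. The genuinely fortunate feature that enables this strategy is that the three constraints encoded in \eqref{eqn4.2} live on completely disjoint groups of variables, so that the off-diagonal count factorises cleanly into a third moment of $r$ rather than a more intricate correlation sum. The main obstacle in the broader proof of Theorem \ref{thm1.1} was establishing the moment bound itself; once that is in hand, the case $\cS_4$ reduces to exploiting the uniqueness of $X$ forced by the rank condition, and all other ranks were already handled in Lemma \ref{lem4.1}.
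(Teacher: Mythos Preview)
Your proof is correct and follows essentially the same approach as the paper: bound the off-diagonal choices by $I_3(N)\ll N^8$ via Lemma~\ref{lem1.5}, then use ${\rm rank}(M)=4$ to pin down $X$ uniquely and hence the diagonal entries in $O(N^2)$ ways. One small correction to your closing commentary: Lemma~\ref{lem4.1} only disposes of ranks $0,1,3$; the rank-$2$ case is treated separately in Lemma~\ref{lem4.3} and is in fact the most delicate.
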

\begin{proof}
Let $(A,B)\in\cS_4$. Since the off-diagonal entries of $A$ and $B$ satisfy \eqref{eqn4.2}, we may apply Lemma \ref{lem1.5} to fix these terms in at most $I_3(N) = O(N^8)$ many ways. Now, as ${\rm rank}(M) = 4$, there is at most one choice for the vector $X$ in \eqref{eqn4.3}.
Consequently, there are at most $O(N^2)$ choices for the diagonal entries of $A$ and $B$. Thus,
\[ |\cS_3| \ll N^8 \cdot N^2 = N^{10}, \]
which is the desired bound.
\end{proof}

Finally, we present our upper bound for $|\cS_2|$. As observed in the introduction, a positive proportion of commuting pairs lie in $\cS_2$.

\begin{lemma} \label{lem4.3}
    We have $|\cS_2| \ll N^{10}$.
\end{lemma}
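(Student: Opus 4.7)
My plan is to analyse $\cS_2$ by examining the rank structure of $M$. I will partition its rows into three blocks --- $B_1$ (rows 1--2, supported on columns 1--2), $B_2$ (rows 3--4, supported on columns 3--4), and $B_3$ (rows 5--6) --- and track the ranks $r_i := \mathrm{rank}(B_i) \in \{0, 1, 2\}$ along with the row spaces $V_i \subseteq \R^4$. Direct calculation gives $r_1 = 2 \Leftrightarrow \cD_{2,4} \neq 0$, $r_2 = 2 \Leftrightarrow \cD_{3,7} \neq 0$, and (since every $2 \times 2$ minor of $B_3$ equals $\pm \cD_{6,8}$ or $0$) $r_3 = 2 \Leftrightarrow \cD_{6,8} \neq 0$. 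My first step is to show that $(A, B) \in \cS_2$ forces $\cD_{2,4} = \cD_{3,7} = \cD_{6,8} = 0$: otherwise \eqref{eqn4.2} makes these three quantities equal to a common nonzero value, so $r_1 = r_2 = 2$; the disjointness of the column supports of $B_1, B_2$ then yields $V_1 + V_2 = \R^4$ and hence $\mathrm{rank}(M) \geq 4$, a contradiction. Thus $r_1, r_2, r_3 \in \{0, 1\}$ throughout $\cS_2$.

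I then split $\cS_2$ into subcases according to $(r_1, r_2, r_3)$. The only subcases compatible with $\mathrm{rank}(M) = 2$ are $(1, 1, 0), (1, 0, 1), (0, 1, 1)$, in which the two active $V_i$'s are automatically transverse, and $(1, 1, 1)$, where additionally $V_3 \subseteq V_1 + V_2$ must hold. For each of the three ``two-block'' subcases --- for example, $(1, 1, 0)$, which forces $(a_2, b_2) \parallel (a_4, b_4)$, $(a_3, b_3) \parallel (a_7, b_7)$, and $a_6 = a_8 = b_6 = b_8 = 0$ --- Lemma \ref{lem1.5} bounds the off-diagonal configurations by $r(0)^2 \ll N^4 (\log N)^2$. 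With the off-diagonals fixed, $\mathrm{rank}(M) = 2$ confines $X$ to a coset of a rank-$2$ sublattice of $\Z^4$, giving $O(N^2)$ integer solutions $X \in [-2N, 2N]^4$; then $(a_1, b_1) \in [-N, N]^2$ is free, contributing $O(N^2)$ more. Each of these three subcases therefore contributes $O(N^{8} (\log N)^2)$.

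The dominant contribution comes from the $(1, 1, 1)$ subcase. Writing $(p, q), (u, v), (x, y) \in \Z^2$ for primitive vectors with $(a_2, b_2), (a_4, b_4) \in \Z(p, q)$, $(a_3, b_3), (a_7, b_7) \in \Z(u, v)$, and $(a_6, b_6), (a_8, b_8) \in \Z(x, y)$, I will unpack the containment $V_3 \subseteq V_1 + V_2$, i.e.\ $(y, -x, -y, x) \in \mathrm{span}\{(-q, p, 0, 0), (0, 0, -v, u)\}$, to conclude $(x, y) \parallel (p, q)$ and $(x, y) \parallel (u, v)$. Primitivity then forces $(p, q) = \pm (u, v) = \pm (x, y)$, so all six off-diagonal pairs lie in $\Z \cdot (p, q)$ for a single primitive $(p, q)$. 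The off-diagonal count becomes
\[ \sum_{\substack{(p, q) \in \Z^2,\ \gcd(p, q) = 1 \\ 1 \leq \max(|p|, |q|) \leq N}} \left(\frac{2N}{\max(|p|, |q|)} + 1\right)^{6} \ll \sum_{m = 1}^N m \cdot (N/m)^6 \ll N^6, \]
and multiplying by the $O(N^4)$ diagonal count gives $O(N^{10})$.

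Summing over subcases will give $|\cS_2| \ll N^{10}$. The main obstacle is the $(1, 1, 1)$ subcase: a naive bound invoking Lemma \ref{lem1.5} once per block would give only $r(0)^3 \ll N^6 (\log N)^3$ off-diagonal configurations and lose a $(\log N)^3$ factor. Avoiding this loss requires exploiting the geometric rigidity from $V_3 \subseteq V_1 + V_2$ to collapse the three blocks to a single primitive direction in $\Z^2$, replacing three logarithmic divisor sums with the convergent series $\sum_m m^{-5}$.
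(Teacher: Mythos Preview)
Your proof is correct and follows essentially the same approach as the paper. Both arguments first deduce $\cD_{2,4}=\cD_{3,7}=\cD_{6,8}=0$, split according to whether each pair of rows (your blocks $B_1,B_2,B_3$) is zero or rank one, handle the ``one block vanishes'' cases via $r(0)^2\ll N^4(\log N)^2$, and in the main $(1,1,1)$ case exploit the linear dependence among the three row spaces to force all six off-diagonal pairs $(a_j,b_j)$ into a single primitive direction, yielding the convergent sum $\sum_m m^{-5}$ and the $O(N^6)\cdot O(N^4)=O(N^{10})$ count.
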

\begin{proof}
Let $(A,B)\in\cS_2$. As in the proof of Lemma \ref{lem4.1}, we see that ${\rm rank}(M) = 2$ implies that \eqref{eqn4.5} holds. Hence, the first two rows, the third and fourth rows, and the fifth and sixth rows of $M$ are all pairs of linearly dependent vectors. If one of these pairs consists of two zero vectors, then we see from \eqref{eqn4.5} that there are $r(0)^2 = O(N^4(\log N)^2)$ choices for the remaining entries of $M$. Furthermore, since $M$ has rank $2$, as in the preceding case, we would be able to fix $a_1, a_5, a_9, b_1, b_5, b_9$ in $O(N^4)$ many ways. Consequently, the contribution of this subcase to $|\cS_2|$ is $O(N^8 (\log N)^2)$, which is better than the required upper bound.

Hence, we may now assume that each of the pairs described in the previous paragraph contains a non-zero vector. Explicitly, this means that each of the sets
\begin{align} \label{eqn4.6}
    \{(-b_2,a_2,0,0),(b_4,-a_4,0,0)\}, \ \ \{(0,0,-b_3,a_3),(0,0,b_7,-a_7)\}, \nonumber \\ \{(-b_6,a_6,b_6,-a_6),(b_8,-a_8,-b_8,a_8)\}
\end{align}
spans a space of dimension $1$. For concreteness, we assume that the first element of each of these sets is non-zero; the other cases can be addressed by a similar approach. Now, since $M$ has rank $2$, we deduce that there exist $\mu_1, \mu_2, \mu_3 \in\Q$, not all zero, such that
\begin{equation*}
   \mu_1 \cdot (-b_6,a_6,b_6,-a_6) +  \mu_2 \cdot (-b_2,a_2,0,0) + \mu_3 \cdot (0,0,-b_3,a_3) = 0.
\end{equation*}
This implies that 
\[ \mu_1 b_6 = - \mu_2 b_2, \quad \mu_1 a_6 = - \mu_2 a_2, \quad
\mu_1 b_6 = \mu_3 b_3, \quad \text{and}\quad \mu_1 a_6 =  \mu_3 a_3. \]
If $\mu_1 = 0$, then, as $(-b_2, a_2, 0, 0)$ and $(0,0,-b_3, a_3)$ are non-zero vectors, the preceding equations would imply that $\mu_2 = \mu_3 =0$, which contradicts our hypothesis that $\mu_1, \mu_2, \mu_3$ are not all zero. This shows that $\mu_1\neq 0$. We may similarly deduce that $\mu_2 \neq 0$ and $\mu_3 \neq 0$. Since $\mu_1, \mu_2, \mu_3 \neq 0$, the preceding equations imply that the vectors $(b_6, a_6)$ and $(b_3, a_3)$ are rational scalar multiples of $(b_2, a_2)$. Furthermore, since $\cD_{2,4} = \cD_{3,7} = \cD_{6,8} = 0$, the vectors $(b_4, a_4), (b_7, a_7)$ and $(b_8, a_8)$ must be rational scalar multiples of the vectors $(b_2, a_2), (b_3, a_3)$ and $(b_6, a_6)$. Hence, for every $j \in \{2,3,4,6,7,8\}$, there exists $\sigma_j \in \mathbb{Q}$ such that $(b_j, a_j ) = \sigma_j \cdot (b_2, a_2)$.  As previously mentioned, the cases when a different choice of vectors from the three sets described in \eqref{eqn4.6} are non-zero can be analysed similarly. 

The above argument shows that there exists $k\in\{2,3,4,6,7,8\}$ such that for all $j\in\{2,3,4,6,7,8\}$ the vector $(a_j,b_j)$ is a rational scalar multiple of the non-zero vector $(a_k,b_k)$. Writing $(a_k,b_k)=\gcd(a_k,b_k)(u,v)$ for some coprime integers $u,v\in[-N,N]$, this implies that for each $j \in \{2,3,4,6,7,8\}$ we have $(a_j,b_j)=\lambda_j(u,v)$ for some $\lambda_j\in\Z$. Thus, the total number of choices for the entries of $M$ is equal to the number of choices for $(u,v)$ multiplied by the number of choices for each $\lambda_j$.

We now compute the total number of choices for entries of $M$. For a fixed choice of coprime integers $u,v\in[-N,N]$, there are at most 
\[ 1+2\bigg\lfloor \frac{N}{\max\{|u|,|v|\}}\bigg\rfloor \]
choices of $\lambda_j$ for each $j \in \{2,3,4,6,7,8\}$, where the extra $1$ accounts for the choice $\lambda_j=0$, and the factor of $2$ corresponds to the choice of sign for $\lambda_j\neq 0$. By incurring an additional constant factor in our final upper bound, we may restrict attention to $(u,v)$ with $|u|\leqslant |v|$. Therefore, the total number of choices for the entries of $M$ is at most
\begin{equation*}
    \ll \sum_{\substack{0\leqslant |u| \leqslant |v|\leqslant N \\ \gcd(u,v)=1}} \frac{N^6}{v^6} \ll N^6 \sum_{\substack{0\leqslant u \leqslant v \leqslant N \\ \gcd(u,v)=1}} \frac{1}{v^6} \ll  N^6 +   N^6\sum_{1 \leq v \leq N}\frac{1}{v^5} \ll N^6.
\end{equation*}
Finally, having fixed the entries of $M$ in $O(N^6)$ many ways, as in the preceding cases, the assumption that $M$ has rank $2$ implies that there are at most $O(N^4)$ choices for $a_1, a_5, a_9, b_1, b_5, b_9$. Therefore, the contribution of this subcase to $|\cS_2|$ is at most $O(N^{10})$. This concludes the proof of Lemma \ref{lem4.3}.
\end{proof}

Substituting the bounds presented in Lemmas \ref{lem4.1}, \ref{lem4.2} and \ref{lem4.3} into \eqref{eqn4.4}, we deduce that 
\[ \mathfrak{C}_3(N) \ll N^{10}, \]
thus completing the proof of Theorem \ref{thm1.1}. 

\section{Commuting \texorpdfstring{$2\times 2$}{2 x 2} matrices over the \texorpdfstring{$p$}{p}-adic integers}\label{sec5}

The goal of this section is to prove Theorem \ref{thm1.3}. We are therefore interested in counting pairs of commuting $2\times 2$ matrices with entries in $\Z/p^n\Z$. The number of such pairs is equal to $p^{2n}$ multiplied by the number of solutions over $\Z/p^n\Z$ to the system
\begin{equation}\label{eqn5.1}
    a_2b_3 - a_3b_2 = a_2b_4 - a_4b_2 = a_3b_4 - a_4b_3 = 0.
\end{equation}
Indeed, we recover \eqref{eqn2.1} upon replacing $(a_4,b_4)$ with $(a_4-a_1,b_4-b_1)$ in the above system, and then observing that, for fixed $(a_4-a_1,b_4-b_1)$, there are $p^{2n}$ choices for $(a_1,a_4,b_1,b_4)$.

In contrast to the characteristic zero setting, we have to handle issues arising from zero divisors in $\Z/p^n\Z$. We accomplish this by examining $p$-adic valuations. For each prime $p$ and each non-zero integer $x$, let $\nu_p(x)$ denote the largest non-negative integer $m$ which satisfies $p^m\mid x$. Given $n\in\N$, we can then define the \emph{$p$-adic valuation} $\nu_p:\Z/p^n\Z\to \{0,1,\ldots,n\}$ by
\begin{equation*}
    \nu_p(y+p^n\Z) =
    \begin{cases}
        \nu_p(y), \quad &\text{if }\;p^n\nmid y\\
        n, &\text{if }\;p^n\mid y.
    \end{cases}
\end{equation*}
Since $\nu_p(x)=\nu_p(x+p^n)$ whenever $p^n\nmid x$, this function is well-defined. Moreover, for any $0 \leq m < n$, one has
    \begin{equation}\label{eqn5.2}
        |\{ a\in\Z/p^n\Z: \nu_p(a) = m\}|=p^{n-m}\left(1-\frac{1}{p}\right).
    \end{equation}

    When $\nu_p(a_2)=0$, say, we can invert $a_2$ in each equation in \eqref{eqn5.1} that it appears. This allows us to easily count solutions. To handle the more general setting where every variable is divisible by some $p^h$, we appeal to the following `lifting trick'.

\begin{lemma}\label{lem5.1}
    Let $n\in\N$ and let $p$ be a prime. Let $d,k,m\in\N$ and consider the system of equations
    \begin{equation*}
        F_1(x_1,\ldots,x_m) = \cdots = F_k(x_1,\ldots,x_m)=0,
    \end{equation*}
    where each $F_i$ is a homogeneous polynomial of degree $d$ with  coefficients in $\Z/p^n\Z$. For every non-negative integer $h$, let $S(n,h)$ denote the set of solutions $(x_1,\ldots,x_m)\in(\Z/p^n\Z)^m$ to the above system which satisfy
    \begin{equation*}
        \min\{\nu_p(x_1),\ldots,\nu_p(x_m)\} = h.
    \end{equation*}
    If $0\leqslant h<n/d$, then $|S(n,h)| =p^{(d-1)hm}|S(n-dh,0)|$.
\end{lemma}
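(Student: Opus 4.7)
The plan is to establish a surjection $\phi\colon S(n,h)\to S(n-dh,0)$ all of whose fibers have cardinality $p^{(d-1)hm}$; the lemma follows at once. The natural choice of $\phi$ is the rescaling map. Given $(x_1,\ldots,x_m)\in S(n,h)$, every $x_i$ is divisible by $p^h$ in $\Z/p^n\Z$, so $x_i=p^hy_i$ for a well-defined $y_i\in\Z/p^{n-h}\Z$. Since $h<n/d$ forces $n-dh\geq 1$, and $d\geq 1$ gives $n-h\geq n-dh$, we may further reduce each $y_i$ modulo $p^{n-dh}$ to obtain $\bar y_i\in\Z/p^{n-dh}\Z$, and set $\phi(x_1,\ldots,x_m)=(\bar y_1,\ldots,\bar y_m)$.

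The next step is to verify that $\phi$ lands in $S(n-dh,0)$. Because each $F_j$ is homogeneous of degree $d$, the identity $F_j(x)=0$ in $\Z/p^n\Z$ becomes $p^{dh}F_j(y)\equiv 0\pmod{p^n}$, equivalently $F_j(y)\equiv 0\pmod{p^{n-dh}}$, which is exactly the system cutting out $S(n-dh,\,\cdot\,)$. The valuation condition $\min_i\nu_p(x_i)=h$ rewrites as $\min_i\nu_p(y_i)=0$ in $\Z/p^{n-h}\Z$, and this is preserved on reduction to $\Z/p^{n-dh}\Z$, since $p\nmid y_i$ if and only if $p\nmid\bar y_i$.

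For the fiber count, fix $(\bar y_1,\ldots,\bar y_m)\in S(n-dh,0)$. A preimage is specified by a coordinatewise lift of each $\bar y_i$ from $\Z/p^{n-dh}\Z$ to $\Z/p^{n-h}\Z$; there are $p^{(n-h)-(n-dh)}=p^{(d-1)h}$ such lifts per coordinate, for $p^{(d-1)hm}$ total. Each such lift is automatically a valid preimage: the equation $F_j(y)\equiv 0\pmod{p^{n-dh}}$ survives lifting (as it only constrains $y$ modulo $p^{n-dh}$) and, after multiplication by $p^{dh}$, yields $F_j(x)=0$ in $\Z/p^n\Z$; meanwhile, some $\bar y_i$ is a unit modulo $p$, and units modulo $p$ lift to units modulo $p$, so $\min_i\nu_p(x_i)=h$.

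The main obstacle, modest in nature, is careful bookkeeping of the three moduli $p^n$, $p^{n-h}$, $p^{n-dh}$ and the accompanying valuation conditions, so that nothing is silently reduced more or fewer times than intended. The hypothesis $h<n/d$ is exactly what keeps all three rings nontrivial and the rescaling construction meaningful; homogeneity of degree $d$ is what converts congruences modulo $p^n$ into congruences modulo $p^{n-dh}$ and back.
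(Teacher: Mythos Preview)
Your proof is correct and follows essentially the same route as the paper: both construct the rescaling map $S(n,h)\to S(n-dh,0)$ sending $x\mapsto p^{-h}x\pmod{p^{n-dh}}$, verify it is surjective, and count each fiber as $p^{(d-1)hm}$. Your write-up is slightly more explicit than the paper's in checking that every lift of a point in $S(n-dh,0)$ lands back in $S(n,h)$, but there is no substantive difference in strategy.
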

\begin{proof}
    Consider the map $\Psi:S(n,h)\to S(n-dh,0)$ defined by
    \begin{equation*}
        \Psi(x_1,\ldots,x_m) = p^{-h}\cdot (x_1,\ldots,x_m) \mmod{p^{n-dh}}.
    \end{equation*}
    Here, for $a\in\Z/p^n\Z$ with $\nu_p(a)\geqslant h$, we have written $p^{-h}a$ for the unique $x\in\Z/p^{n-h}\Z$ such that $a=p^h x$. Since $dh< n$, this map is well-defined. Moreover, as the $F_i$ are all homogeneous of degree $d$, by multiplying every entry of an element of $S(n-dh,0)$ by $p^h$, we see that $\Psi$ is surjective. Finally, by viewing $p^{-h}(\mathbf{x} - \mathbf{y})$ as an element of $(\Z/p^{n-h}\Z)^m$, we find that $\Psi(\mathbf{x})=\Psi(\mathbf{y})$ if and only if $x_i \equiv y_i \mmod{p^{n - dh+h}}$ for all $1 \leq i \leq m$, that is, $\nu_p(x_i-y_i)\geqslant n-(d-1)h$ for all $1 \leq i \leq m$. Thus, for any fixed $\mathbf{y} \in S(n,h)$, there are precisely $p^{(d-1)hm}$ choices of $\mathbf{x} \in S(n,h)$ such that $\Psi(\mathbf{x}) = \Psi(\mathbf{y})$. Combining all of these observations finishes the proof.
\end{proof}

As in the integer setting, before counting the total number of solutions to \eqref{eqn5.1}, we first isolate certain `degenerate' solutions.

\begin{lemma}\label{lem5.2}
    Let $p$ be a prime. Let $n\in\N$ and set $q=p^n$. The number of solutions over $\Z/q\Z$ to the system \eqref{eqn5.1} which satisfy
    \begin{equation*}
        a_2b_3 = a_3b_2 = 0
    \end{equation*}
    is $O(n^2q^{7/2})$.
\end{lemma}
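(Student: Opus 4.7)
The plan is to apply Lemma \ref{lem5.1} to reduce the problem to counting solutions in which at least one of the six variables $a_2, a_3, a_4, b_2, b_3, b_4$ is a $p$-adic unit, and then to dispatch this base case by splitting on which variable is the unit. The system of interest consists of the four homogeneous degree-$2$ equations
\[ a_2 b_3 = 0, \quad a_3 b_2 = 0, \quad a_2 b_4 - a_4 b_2 = 0, \quad a_3 b_4 - a_4 b_3 = 0 \]
in the six variables listed above; the remaining relation $a_2 b_3 - a_3 b_2 = 0$ from \eqref{eqn5.1} is implied by these. Applying Lemma \ref{lem5.1} with $d = 2$ and $m = 6$, for every $0 \leq h < n/2$ we have $|S(n, h)| = p^{6h}|S(n-2h, 0)|$. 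Furthermore, solutions with minimum valuation at least $\lceil n/2 \rceil$ each have all six coordinates in $p^{\lceil n/2 \rceil}\Z/p^n\Z$, so their count is at most $p^{6 \lfloor n/2\rfloor} \leq p^{3n}$. Thus the task reduces to showing $|S(n', 0)| = O((n')^2 p^{3n'})$ for every $n' \leq n$.

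To bound $|S(n', 0)|$, I would perform casework on which variable is a unit. In Case A, at least one of $a_2, a_3, b_2, b_3$ is a unit; WLOG suppose $a_2$ is a unit. Then $a_2 b_3 = 0$ forces $b_3 = 0$, $a_2 b_4 = a_4 b_2$ determines $b_4$ from $(a_4, b_2)$, and one checks that $a_3 b_4 = a_4 b_3 = 0$ is automatic given $a_3 b_2 = 0$. The sole remaining constraint $a_3 b_2 = 0$ has $O(n' p^{n'})$ solutions in $(a_3, b_2) \in (\Z/p^{n'}\Z)^2$ by a standard sum over valuations, so with the free choices of $a_2$ and $a_4$, Case A contributes $O(n' p^{3n'})$. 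In Case B, none of $a_2, a_3, b_2, b_3$ is a unit but $a_4$ (say) is. Then $b_2 = a_2 b_4 / a_4$ and $b_3 = a_3 b_4 / a_4$, and substitution shows that both $a_2 b_3 = 0$ and $a_3 b_2 = 0$ collapse into the single cubic relation $a_2 a_3 b_4 = 0$. The number of triples $(x, y, z) \in (\Z/p^{n'}\Z)^3$ with $xyz = 0$ is bounded by $O((n')^2 p^{2n'})$, which can be seen by summing $p^{3n' - \alpha - \gamma - \epsilon}$ over valuation triples $(\alpha, \gamma, \epsilon) \in [0, n']^3$ with $\alpha + \gamma + \epsilon \geq n'$ and noting that the number of such triples is $O((n')^2)$. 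Multiplying by the free choice of $a_4$ yields $O((n')^2 p^{3n'})$ for Case B as well.

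Combining gives $|S(n', 0)| = O((n')^2 p^{3n'})$. Summing through Lemma \ref{lem5.1},
\[ \sum_{h=0}^{\lfloor n/2\rfloor-1} p^{6h}\, |S(n-2h, 0)| + O(p^{3n}) \leq O(p^{3n}) \sum_{h=0}^{\lfloor n/2\rfloor-1} (n - 2h)^2 + O(p^{3n}) = O(n^3 p^{3n}). \]
Since $n \leq C p^{n/2}$ for some absolute constant $C$ (this holds for all pairs $(p, n)$ with $p \geq 2, n \geq 1$ except for $(p, n) = (2, 3)$, which is absorbed into the implicit constant), we conclude $O(n^3 p^{3n}) = O(n^2 p^{7n/2}) = O(n^2 q^{7/2})$. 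The main obstacle is the analysis of Case B: the derived cubic $a_2 a_3 b_4 = 0$ in three variables (as opposed to the pair relation in Case A) introduces an extra factor of $n'$, and one must verify carefully that this does not spoil the final $O(n^2 q^{7/2})$ bound after summing the geometric contributions over $h$.
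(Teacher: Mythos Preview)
Your proof is correct in spirit and yields the desired bound, but there are two small slips to fix. First, the upper limit of the sum over $h$ should be $\lceil n/2\rceil - 1$ rather than $\lfloor n/2\rfloor - 1$; when $n$ is odd your written range omits $h=(n-1)/2$, which is still covered by Lemma~\ref{lem5.1} and contributes another $O(p^{3n})$ term. Second, in Case~B the claim that ``the number of such triples is $O((n')^2)$'' is false as stated: the number of $(\alpha,\gamma,\epsilon)\in[0,n']^3$ with $\alpha+\gamma+\epsilon\geq n'$ is $\Theta((n')^3)$. The bound $O((n')^2p^{2n'})$ you assert is nonetheless correct, since grouping by the value $s=\alpha+\gamma+\epsilon$ gives at most $O((n')^2)$ triples for each fixed $s$, and the geometric sum $\sum_{s\geq n'} p^{3n'-s}$ is dominated by its first term.

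Your route differs genuinely from the paper's. You invoke Lemma~\ref{lem5.1} on all six variables, reduce to the base case $|S(n',0)|$, and then split on which of the six variables is a unit; the case where only $a_4$ or $b_4$ is a unit collapses to a cubic constraint $a_2a_3b_4=0$. The paper does not use Lemma~\ref{lem5.1} here at all. Instead it parametrises directly by $h=\min\{\nu_p(a_2),\nu_p(a_3),\nu_p(b_2),\nu_p(b_3)\}$ (the minimum over only the four variables appearing in the degenerate equations), and for each $h$ counts the pairs $(a_2,b_3)$, $(a_3,b_2)$, $(a_4,b_4)$ separately before summing over $h$ explicitly. Your approach recycles Lemma~\ref{lem5.1} neatly and actually produces the intermediate bound $O(n^3p^{3n})$, which is \emph{stronger} than $O(n^2q^{7/2})$; you then discard the extra saving via the crude inequality $n\ll p^{n/2}$. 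The paper's hands-on summation reaches $O(n^2q^{7/2})$ directly without that conversion, at the price of a more explicit computation.
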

\begin{proof}
By incurring a factor of $4$ in our final bound, we may restrict attention to solutions where
    \begin{equation*}
        \nu_p(a_2)=\min\{\nu_p(a_2),\nu_p(a_3),\nu_p(b_2),\nu_p(b_3)\}
    \end{equation*}
    Writing $h = \nu_p(a_2)$, if $h=n$, then there are $q^2$ solutions to \eqref{eqn5.1} since all variables other than $a_4,b_4$ must vanish. We may therefore assume throughout that $h<n$.
    For a fixed choice of $h<n$, the number of choices for $a_2$ is $p^{n-h}(1-p^{-1})$.
    The condition that $a_2b_3 = 0$ means that $\nu_p(b_3)\geqslant n-h$, and so there are $p^h$ choices for $b_3$. Hence, there are $p^n(1-p^{-1})$ choices for the pair $(a_2,b_3)$.

    Next we count the number of choices for $(a_3,b_2)$. That is, we count the number of solutions to $a_3b_2=0$ subject to the constraint that $\nu_p(a_3),\nu_p(b_2)\geqslant h$. Notice that the equation $a_3b_2=0$ implies that $\nu_p(a_3)\geqslant n - \nu_p(b_2)$. Invoking \eqref{eqn5.2}, for $h<n/2$, we find that the number of choices for $(a_3,b_2)$ is at most
    \begin{align*}
        p^{n-h}\left(1-\frac{1}{p}\right)p^h + p^{n-(h+1)}\left(1-\frac{1}{p}\right)p^{h+1} + \cdots + p^{h}\left(1-\frac{1}{p}\right)p^{n-h} + p^{h-1}\cdot p^{n-h} \\
        = q(n-2h+1)\left(1-\frac{1}{p}\right) + \frac{q}{p} = q\left( n-2h+1 - \frac{n-2h}{p}\right).
    \end{align*}
    Here, the $(n-2h+1)$ terms in the first line come from summing over all choices of $\nu_p(b_2)\in\{h,\ldots,n-h\}$ and the final term is the number of $(a_3,b_2)$ with $p^{n-h+1}\mid b_2$ and $p^h\mid a_3$.
    If $h\geqslant n/2$, then we instead trivially bound the number of $(a_3,b_2)$ by the number of pairs of elements which are divisible by $p^h$, resulting in an upper bound of $p^{-2h}q^2$.
    
    Finally, we estimate the number of valid choices of $(a_4,b_4)$. With $(a_2,b_2)$ fixed from before, if $h=0$, then we may invert $a_2$ to show that that there are $q$ choices for $(a_4,b_4)$. When $1\leq h \leq n$, notice that $a_2b_4 = a_4b_2$ if and only if 
    \begin{equation*}
        a_2'b_4 \equiv a_4b_2' \mmod{p^{n-h}},
    \end{equation*}
    where $a_2',b_2'\in\Z/p^{n-h}\Z$ are uniquely defined by the equations $a_2=p^ha_2'$ and $b_2=p^hb_2'$. As before, there are $p^{n-h}$ solutions $a_4,b_4\in\Z/p^{n-h}\Z$ to this latter congruence. Since each $x\in\Z/p^{n-h}\Z$ lifts to exactly $p^h$ elements of $\Z/q\Z$, we deduce that,
    for any fixed choice of $(h;a_2,b_2)$, there are $p^hq$ solutions $a_4,b_4\in\Z/q\Z$ to the equation $a_2b_4 = a_4b_2$. This gives an upper bound for the number of $(a_4,b_4)$ which satisfy \eqref{eqn5.1}.
    
    Combining all of the observations from the previous three paragraphs, we conclude that the total number of solutions with $h<n$ is
    \begin{align*}
        &\ll  q^3\left(1-\frac{1}{p}\right)\sum_{0\leqslant h < n/2}p^h\left( n-2h+1 - \frac{n-2h}{p}\right) + q^4\left(1-\frac{1}{p}\right)\sum_{n/2\leqslant h < n}p^{-h}\\
        &\ll q^{7/2
        }\left(1-\frac{1}{p}\right)\sum_{0\leqslant h < n/2}\left(n+1\right) + nq^{7/2}\left(1-\frac{1}{p}\right)\ll n^2q^{7/2}.
    \end{align*}
    Here, we have also appealed to the trivial bound $p\geqslant 2$.
    Adding the $q^2<q^{7/2}$ solutions with $h=n$ to this total finishes the proof.
\end{proof}

We now use Lemma \ref{lem5.1} to count the remaining solutions to \eqref{eqn5.1} and thereby prove Theorem \ref{thm1.3}.

\begin{proof}[Proof of Theorem \ref{thm1.3}]
    
    For each $0\leqslant h<n/2$, let $S(n,h)$ denote the set of solutions to \eqref{eqn5.1} over $\Z/p^n\Z$ which satisfy
    \begin{equation*}
        h=\min\{\nu_p(a_2), \nu_p(a_3), \nu_p(a_4), \nu_p(b_2), \nu_p(b_3), \nu_p(b_4)\}.
    \end{equation*}
    We claim that
    \begin{equation*}
       |S(n,h)|= p^{4n-2h}\left(1+\frac{1}{p}\right) \left(1-\frac{1}{p^3}\right).
    \end{equation*}
    Once this is proved, Theorem \ref{thm1.3} follows by summing over all $0\leqslant h < n/2$ and using Lemma \ref{lem5.2} to bound the remaining solutions.

    Lemma \ref{lem5.1} shows that we only have to verify the above claim for $h=0$.
    For clarity, we employ the temporary notation
    \begin{equation*}
        g(a,b) := \min\{\nu_p(a),\nu_p(b)\}.
    \end{equation*}
    By symmetry of the variables, the inclusion-exclusion principle informs us that
    \begin{equation}\label{eqn5.3}
        |S(n,0)| = 3|U_0| - 3|U_1| + |U_2|,
    \end{equation}
    where
    \begin{align*}
        U_0 &=\{(a_2,a_3,a_4,b_2,b_3,b_4)\in S(n,0): g(a_2,b_2) = 0\},\\
        U_1 &=\{(a_2,a_3,a_4,b_2,b_3,b_4)\in S(n,0): g(a_2,b_2) = g(a_3,b_3) = 0\} \  \text{and} \\
        U_2 &=\{(a_2,a_3,a_4,b_2,b_3,b_4)\in S(n,0): g(a_2,b_2) = g(a_3,b_3) = g(a_4,b_4) = 0\}.
    \end{align*}
    
    Suppose we fix a pair $(a_2,b_2)$ with $g(a_2,b_2)=0$.
    Proceeding as in the proof of Lemma \ref{lem5.2}, by inverting whichever of $a_2$ or $b_2$ is a multiplicative unit, we find that there are $p^n$ choices for $a,b\in\Z/p^n\Z$ such that $a_2b = ab_2$. If we further stipulate that $g(a,b)=0$, then there are $p^n(1-p^{-1})$ choices for $(a,b)$. This comes from noting that $\nu_p(a_2)=0$ would imply that $\nu_p(a)=0$, and similarly $\nu_p(b)=0$ if $\nu_p(b_2)=0$. Moreover, given $a_2,a,a',b_2,b, b'\in\Z/p^n\Z$ with $g(a_2,b_2) = g(a,b) = g(a',b') =0$, we claim that whenever $a_2 b = b_2 a$ and $a_2 b' = b_2 a'$ hold, then $ab' = a'b$ must also hold. Indeed, if $\nu_p(a_2) = 0$, then $\nu_p(a) = \nu_p(a') = 0$, whence, $b' = a_2^{-1} b_2 a' = a^{-1} b a'$, and the case when $\nu_p(b_2) = 0$ can be similarly analysed.

    Combining all of the observations of the previous paragraph, and
    recalling that
    \begin{equation*}
        |\{(a,b)\in(\Z/p^n\Z)^2: g(a,b)=0\}| = p^{2n}\left(2 - \frac{2}{p} - \left(1 - \frac{1}{p} \right)^2\right) = p^{2n}\left(1-\frac{1}{p^2}\right),
    \end{equation*}
    we conclude that
    \begin{equation*}
        |U_k| = p^{4n}\left(1-\frac{1}{p^2}\right)\left(1-\frac{1}{p}\right)^{k} 
    \end{equation*}
    for every $0 \leq k \leq 2$. Inserting these estimates into \eqref{eqn5.3}, we obtain the claimed estimate.
\end{proof}

\section{Supplementary results and further remarks}\label{sec6}

Our aim in this section is to prove Lemma \ref{lem1.4} and Theorem \ref{thm1.6} as well as record some further remarks on estimating the number of commuting pairs of $d\times d$ matrices for $d\geqslant 4$. 

\subsection{Proof of Lemma \ref{lem1.4}}

We begin by proving Lemma \ref{lem1.4}. 
Let $A = (a_{i,j})_{1 \leq i, j \leq d}$ and $B = (b_{i,j})_{1 \leq i, j \leq d}$ be elements of ${\rm Mat}_d(\mathbb{Z}, N)$ and let $I_d$ be the $d \times d$ identity matrix. The contribution to $\mathfrak{C}_d(N)$ from the cases when either $A$ or $B$ is a multiple of the identity matrix is presented in \eqref{eqn1.1}. It therefore suffices to show that there exist at least 
    \[ \frac{2}{d+1}(2N)^{d^2 + 1}(1- O_d(N^{-1})) \]
    pairs $(A,B) \in {\rm Mat}_d(\mathbb{Z},N)$ such that
the off-diagonal entries of $A$ and $B$ are non-zero and that $AB = BA$. 
In this endeavour, note that for any fixed $A$ and any $\lambda \in \mathbb{Z}$, the matrix $B = A + \lambda  I$ commutes with $A$. In particular, any $A, B \in {\rm Mat}_d(\mathbb{Z}, N)$ satisfying
\begin{equation} \label{eqn6.1}
    a_{1,1} - b_{1,1} = a_{2,2} - b_{2,2} = \dots = a_{d,d} - b_{d,d}
\end{equation} 
and $a_{i,j} = b_{i,j}$ for every $1 \leq i, j \leq d$ with $i \neq j$ must commute. For every $i \neq j$, we can fix $a_{i,j} \in \{-N, -N+1, \dots, N\}\setminus \{0\}$ in $(2N)^{d^2 - d}$ ways. Thus, we only need to show that the number of solutions $E_d(N)$ to \eqref{eqn6.1} satisfies
\begin{equation}  \label{eqn6.2}
E_d(N) \geq \frac{2}{d+1} (2N)^{d+1}(1 - O_d(N^{-1})). 
\end{equation}

For any $n \in [-2N, 2N] \cap \mathbb{Z}$, let $r(n)$ count all pairs $n_1, n_2 \in \{-N, \dots, N\} \cap \mathbb{Z}$ such that $n = n_2 - n_1$. Since $r(n) = 2N - |n| + O(1)$, we have 
\begin{align*}
    E_d(N)
    & = \sum_{-2N \leq x \leq 2N} r(x)^d =  2\sum_{1 \leq n \leq 2N} (2N - n)^d  + O(N^d) = 2 \int_{1}^{2N} (2N  - x)^d dx + O(N^d) \\
    & = 2 (2N)^{d+1} \int_{1/2N}^{1} (1 - y)^d dy + O(N^d) 
     = \frac{2 (2N)^{d+1}}{d+1} (1- (2N)^{-1})^{d+1} + O(N^d) ,
\end{align*}
which simplifies to deliver the estimate claimed in \eqref{eqn6.2}.

\subsection{Commuting pairs over sets with small doubling}

We will now turn to proving Theorem \ref{thm1.6}, and so, for every finite, non-empty set $\cA \subset \R$ and every $n \in \mathbb{Z}$, we define
\[ r_{\cA}(n) = \sum_{a_1, \dots, a_4 \in \cA} \mathds{1}_{a_1 a_2 - a_3 a_4 =n} \ \ \text{and} \ \ I_3(\cA) = \sum_{n \in \mathbb{Z}} r_{\cA}(n)^3.  \]
With this in hand, we record the following lemma.

\begin{lemma} \label{lem6.1}
Let $\cA \subset \R$ be a finite, non-empty set with $|\cA+ \cA| = {\rm K}|\cA|$ for some ${\rm K} \geq 1$. Then 
\[ \sup_{n \in \mathbb{Z}} r_{\cA}(n) \leq r_{\cA}(0) \ll {\rm K}^2 |\cA|^2 \log (2|\cA|) \ \ \text{and} \ \ I_3(\cA) \ll {\rm K}^4 |\cA|^8 (\log (2|\cA|))^2. \]
\end{lemma}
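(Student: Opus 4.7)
The plan is to first establish the pointwise inequality $r_\cA(n) \leq r_\cA(0)$ for every $n$, then bound $r_\cA(0)$ using Solymosi's sum-product theorem \cite{So2009}, and finally combine these two ingredients to obtain the estimate for $I_3(\cA)$. The entire argument reduces both assertions to a single multiplicative energy bound.

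For the pointwise inequality, define $f(d) := |\{(a, a') \in \cA^2 : aa' = d\}|$, so that $r_\cA(n) = \sum_d f(d) f(d-n)$ and in particular $r_\cA(0) = \sum_d f(d)^2$. The Cauchy-Schwarz inequality then gives
\[
    r_\cA(n) = \sum_d f(d) f(d-n) \leq \left(\sum_d f(d)^2 \right)^{1/2} \left(\sum_d f(d-n)^2\right)^{1/2} = r_\cA(0),
\]
as required.

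The quantity $r_\cA(0)$ is precisely the multiplicative energy $E^\times(\cA)$. Solymosi's theorem \cite{So2009} asserts that $E^\times(B) \ll |B + B|^2 \log(2|B|)$ for every finite $B \subset \R_{>0}$. To apply this in our setting, decompose $\cA$ into its zero element (if present) together with the sets $\cA^+$ and $\cA^-$ of positive and negative elements. Quadruples counted by $r_\cA(0)$ with some coordinate equal to zero contribute only $O(|\cA|^3)$. For the remaining quadruples, since $a_1 a_2 = a_3 a_4 \neq 0$, the sign patterns of $(a_1, a_2)$ and $(a_3, a_4)$ must agree, and the count splits into $E^\times(\cA^+)$, $E^\times(\cA^-)$, and a mixed energy $M$ counting quadruples with $a_1, a_3 \in \cA^+$, $a_2, a_4 \in \cA^-$ and $a_1 a_2 = a_3 a_4$. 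Rewriting the latter condition as $a_1/a_3 = a_4/a_2$ and applying Cauchy-Schwarz yields $M \leq \sqrt{E^\times(\cA^+) E^\times(\cA^-)}$. Since $E^\times(\cA^-) = E^\times(-\cA^-)$ and $|(-\cA^-) + (-\cA^-)| = |\cA^- + \cA^-| \leq K|\cA|$, Solymosi's theorem applied to each of $\cA^+$ and $-\cA^-$ (both subsets of $\R_{>0}$ with sumset size at most $K|\cA|$) delivers the bound $r_\cA(0) \ll K^2 |\cA|^2 \log(2|\cA|)$.

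Finally, for $I_3(\cA)$, using $r_\cA(n) \leq r_\cA(0)$ twice gives $r_\cA(n)^3 \leq r_\cA(0)^2 r_\cA(n)$, so
\[
    I_3(\cA) = \sum_n r_\cA(n)^3 \leq r_\cA(0)^2 \sum_n r_\cA(n) = r_\cA(0)^2 \cdot |\cA|^4 \ll K^4 |\cA|^8 (\log(2|\cA|))^2,
\]
as claimed. The main obstacle is the reduction to Solymosi's theorem in the presence of elements of differing signs; this is standard sign-class bookkeeping and does not affect the order of magnitude of the estimates.
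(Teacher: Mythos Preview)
Your proof follows the same strategy as the paper's: Cauchy--Schwarz for the pointwise bound $r_\cA(n)\le r_\cA(0)$, Solymosi for $r_\cA(0)$, and the identity $\sum_n r_\cA(n)=|\cA|^4$ for $I_3$. The only slip is in the sign bookkeeping: the assertion that ``the sign patterns of $(a_1,a_2)$ and $(a_3,a_4)$ must agree'' is false (for instance $1\cdot 4=(-2)(-2)$), so your enumeration omits the cross terms with $a_1,a_2\in\cA^+$ and $a_3,a_4\in\cA^-$ (and symmetric variants); these are bounded by the same Cauchy--Schwarz argument you use for $M$, and the conclusion is unaffected.
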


\begin{proof}
    The second inequality follows from combining the first inequality and the fact that $\sum_{n \in \mathbb{Z}} r_{\cA}(n) = |\cA|^4$, whence it suffices to prove the first inequality. Given $n \in \mathbb{Z}$, we may apply the Cauchy--Schwarz inequality to deduce that
    \begin{align*} 
    r_{\cA}(n) & = \sum_{h \in \mathbb{Z}} \left(\sum_{a_1,a_2 \in \cA} \mathds{1}_{h +n= a_1a_2}\right)\left(\sum_{a_3,a_4 \in \cA} \mathds{1}_{h = a_3a_4}\right) \\
  &   \leq \left(\sum_{h \in \mathbb{Z}} \left(\sum_{a_1,a_2 \in \cA} \mathds{1}_{h +n= a_1a_2}\right)^2 \right)^{1/2} \left(\sum_{h \in \mathbb{Z}} \left(\sum_{a_3,a_4 \in \cA} \mathds{1}_{h = a_3a_4}\right)^2 \right)^{1/2} = r_{\cA}(0).
    \end{align*}
    Finally, employing work of Solymosi \cite{So2009} on the sum-product phenomenon, we see that 
    \[ r_{\cA}(0) \ll |\cA|^2 + \sum_{a_1, \dots, a_4 \in \cA \setminus \{0\}} \mathds{1}_{a_1a_2 = a_3 a_4} \ll {\rm K}^2 |\cA|^2 \log (2|\cA|). \qedhere\]
\end{proof}

Our aim now is to prove Theorem \ref{thm1.6}, and in this endeavour, we will follow the proof of Theorem \ref{thm1.1} and simply replace the application of Lemma \ref{lem1.5} with Lemma \ref{lem6.1}.

\begin{proof}[Proof of Theorem \ref{thm1.6}]
Given matrices $A,B$ as in \eqref{eqn4.1} with $a_1, \dots, b_9 \in \cA$, we see that $AB = BA$ if and only if \eqref{eqn4.3} and \eqref{eqn4.2} hold true. As before, we split our proof into various cases depending on ${\rm rank}(M)$. If ${\rm rank}(M) =4$, we can use Lemma \ref{lem6.1} to fix the off-diagonal entries of $A$ and $B$ in $O(I_3(\cA)) = O( |\cA|^8{\rm K}^4 (\log (2|\cA|))^2 )$ ways and then use the fact that ${\rm rank}(M) = 4$ to fix the diagonal entries of $A$ and $B$ in $O(|\cA|^2)$ many ways. Thus, when ${\rm rank}(M) =4$, we are done. When $2 \leq {\rm rank}(M) \leq 3$, the off-diagonal entries of $A$ and $B$ satisfy \eqref{eqn4.5}, and so, Lemma \ref{lem6.1} implies that these can be fixed in $O(r_{\cA}(0)^3) = O(|\cA|^6 {\rm K}^6 (\log (2|\cA|))^3$ many ways. Moreover, since ${\rm rank}(M) \geq 2$, we can fix the diagonal entries of $A$ and $B$ in $O(|\cA|^4)$ many ways, and so, we are done in this case as well. Finally, if ${\rm rank}(M) \leq 1$, then as before, we can deduce that at least $8$ of the off-diagonal entries of $A$ and $B$ are zero, in which case, we can fix the rest of entries of $A$ and $B$ in $O(|\cA|^{10})$ many ways, and so, we are done in this case as well.
\end{proof}

\subsection{Further remarks}

We conclude with a brief discussion on commuting pairs of $d\times d$ matrices with $d\geqslant 4$. We are therefore studying solutions $(A,B)\in\Mat_d(\Z,N)$ to $AB=BA$. For $d\geqslant 4$, there are additional constraints placed upon the entries of $A$ and $B$ beyond those which emerge from the approach we used when $d\leqslant 3$. Such constraints have been observed before in work on ``commuting patterns'' of matrices \cite{JM2005,JW2012}, where one seeks solutions $(A,B)\in \Mat_d(\R)$ to $AB=BA$ subject to conditions on which entries of $A$ and $B$ are allowed to be zero. For example, if we require all entries of $B$ to be non-zero, \cite[Theorem 2.4]{JM2005} exhibits certain algebraic ``swath conditions'' that the positions of the non-zero entries of $A$ must satisfy. We refer the reader to these papers for further details and references.

To illustrate the difficulties that arise for higher dimensions, we will now present a naive generalisation of our approach from \S\ref{sec4} to the case when $d \geq 4$. We split our analysis into various cases depending on the behaviour of the $d-1$ equations arising from the diagonal entries of $AB-BA$, consequently fixing the off-diagonal entries of $A$ and $B$ in different ways depending on each case. Following \S\ref{sec4}, we then write the remaining $d(d-1)$ equations as $MX=Y$, where $M$ is a $d(d-1) \times 2(d-1)$ matrix whose entries come from the off-diagonal entries of $A,B$, and $X \in \R^{2(d-1)}$ and $Y \in \R^{d(d-1)}$ are defined suitably.

Suppose that $M$ has full rank, which was one of the most significant cases studied in \S\ref{sec4}. Applying variants of Lemma \ref{lem1.5}, one may fix the off-diagonal entries of $A,B$ in $O(N^{2d^2 - 4d +2})$ many ways via the $d-1$ quadratic equations. By inverting a sub-matrix of $M$ to fix $X$, one can then fix the diagonal entries of $A,B$ in $O(N^2)$ many ways. Hence, this argument already gives an upper bound of the shape $O(N^{2d^2 - 4d+4})$. For $d\geqslant 4$, this is much larger than both the conjectured upper bound $O(N^{d^2 + 1})$ and Browning--Sawin--Wang's bound \eqref{eqn1.3}. 

One would therefore require a more refined approach which further exploits the many extra constraints present in the $d \geq 4$ setting.
In order to elucidate upon the latter, note that in the $3 \times 3$ case, one can show, either directly or via Cramer's rule, that if $X\in\R^4$ is such that the first four equations of $MX=Y$ are satisfied, then all six are satisfied. In other words, the last two equations are `consistent' with the first four. However, when $d\geqslant 4$ this property may not hold. We illustrate this with the following example. Let $d=4$ and choose
\begin{equation*}
    A = \begin{pmatrix}
a_1 & 0 & 0 & 0 \\
1 & a_6 & 0 & 0 \\
1 & a_{10} & a_{11} & 3 \\
1 & 1 & 2 & a_{16}
\end{pmatrix}
\ \ \text{and} \ \ B=
\begin{pmatrix}
b_1 & 1 & 1 & -2 \\
0 & b_6 & 0 & 0 \\
0 & b_{10} & b_{11} & 2 \\
0 & 0 & 1 & b_{16}
\end{pmatrix}.
\end{equation*}
Our choices ensure that all the diagonal entries of $AB-BA$ vanish.
One can also verify that the matrix $M$ has full rank by noting that the the first six rows form a $6\times 6$ matrix with determinant $-2$. However, the seventh row of $M$ is the zero vector whilst the seventh entry of $Y$ is $1$. Consequently, there are no solutions $X\in \R^{6}$ to the full system $MX=Y$, despite the fact that there is always a unique solution to the first six equations. Thus, the seventh equation can be `inconsistent' with the first six.

\end{document}